\journal{ }
\begin{document}
	\newtheorem{theorem}{Theorem}[section]
	\newtheorem{problem}[theorem]{Problem}
	\newtheorem{corollary}[theorem]{Corollary}
	\newtheorem{observation}[theorem]{Observation}
	\newtheorem{definition}[theorem]{Definition}
	\newtheorem{conjecture}[theorem]{Conjecture}
	\newtheorem{question}[theorem]{Question}
	\newtheorem{lemma}[theorem]{Lemma}
	\newtheorem{proposition}[theorem]{Proposition}
	\newtheorem{example}[theorem]{Example}
	\newenvironment{proof}{\noindent {\bf
			Proof.}}{\hfill $\square$\par\medskip}
	\newcommand{\remark}{\medskip\par\noindent {\bf Remark.~~}}
	\newcommand{\pp}{{\it p.}}
	\newcommand{\de}{\em}

	\newcommand{\1}{{\uppercase\expandafter{\romannumeral1}}}
	\newcommand{\2}{{\uppercase\expandafter{\romannumeral2}}}
	\newcommand{\3}{{\uppercase\expandafter{\romannumeral3}}}
	\newcommand{\4}{{\uppercase\expandafter{\romannumeral4}}}
	
	\begin{frontmatter}
	\title{The  maximum  number  of  cliques  in  disjoint copies of graphs}

		\author[1]{Zhipeng Gao} 
	\ead{gaozhipeng@xidian.edu.cn}

		\author[2]{Ping Li} 
	\ead{lp-math@snnu.edu.cn}

		\author[3]{Changhong Lu}
	\ead{chlu@math.ecnu.edu.cn}

		\author[3]{Rui Sun\corref{cor}} 
	\cortext[cor]{Corresponding author.}
	\ead{sunruicaicaicai@163.com}

		\author[3]{Long-Tu Yuan}
	\ead{ltyuan@math.ecnu.edu.cn}
	
	\address[1] {School of Mathematics and Statistics, Xidian University, Xi'an, 710071, China.}
	
	\address[2] {School of Mathematics and
		Statistics, Shaanxi Normal University, Xi'an, Shaanxi, China. }
		
		\address[3] {School of Mathematical Sciences,
			Shanghai Key Laboratory of PMMP,
			East China Normal University,
			Shanghai 200241, China.}
			
	\date{}
	
	
	\begin{abstract}
	The problem of determining the maximum number of copies of $T$ in an $H$-free graph, for any graphs $T$ and $H$, was considered by Alon and Shikhelman. This is a variant of Tur\'{a}n's classical extremal problem.
We show lower and upper bounds for the maximum number of $s$-cliques in a graph with no disjoint copies of arbitrary graph.	
We also determine the maximum number of $s$-cliques in an $n$-vertex graph that does not contain a disjoint union of $k$ paths of length two when $k=2,3$, or $s\geqslant k+2$, or $n$ is sufficiently large, this partly confirms a conjecture posed by Chen, Yang, Yuan, and Zhang \cite{2024Chen113974}.
		\end{abstract}

	\end{frontmatter}

	\section{Introduction}
	
	 A graph $G$ is said to be $H$-free if it does not contain any subgraph isomorphic to $H$.
	 The classical extremal function $\operatorname{ex}\left(n, H\right)$  denoted as $\operatorname{ex}(n, H)$, is formally defined as the maximum number of edges on an $n$-vertex $H$-free graph.
	This function extends naturally to a scenario where the objective is not merely to maximize the number of edges, but rather to maximize the number of copies of a specified graph $T$ within an $ n $-vertex graph that is $H$-free.
	In accordance with the notation introduced by Alon and Shikhelman \cite{2016Alon146}, the more general function is denoted as $\operatorname{ex}(n, T, H)$, which is commonly referred to the 
	generalized Tur\'{a}n number of $H$.
	Let $P_k$ be the path with $k$ vertices.
	Obviously, $\operatorname{ex}(n, P_2, H) =$ $\operatorname{ex}(n, H)$. 
	We denote 
	the cycle with $\ell$ vertices by $C_\ell$ and the complete graph with $s$ vertices by $K_s$.

	The exploration of such problems dates back to the foundational work of  Erd\H{o}s \cite{1962Erdoes}, who established the values of  $\text{ex}(n, K_s, K_t)$  for any two cliques. After these seminal discoveries, a series of related findings were made, with one of the most notable being the resolution of  $\text{ex}(n, C_5, C_3)$  by Hatami et al. \cite{2013Hatami722}, and Grzesik \cite{2012Grzesik1061}, each working independently. A wealth of additional research has further expanded our understanding of generalized extremal numbers, as evidenced by various studies referenced in the literature  \cite{2008Bollobas4332,2019Gerbner103001,2019Letzter2360,2018Luo219}.
	
	Scholars have studied the generalized Tur\'{a}n numbers for the vertex-disjoint union of graphs. Let $kH$ represent the collection of $k$ disjoint copies of the graph $H$. Gerbner, Methuku, and Vizer \cite{2019Gerbner3130} delved into the function $\operatorname{ex}(n, T, kH)$, where $H$ encompasses a complete graph, a cycle, or a complete bipartite graph. We define $G_{\operatorname{ex}}(n, H)$ as the $H$-free graph consisting of $n$ vertices and $\operatorname{ex}(n, H)$ edges.

In this paper, we establish bounds on
$\operatorname{ex}$$(n,K_s, k  H)$ for general $H$. For any graph $H$ and integer $n$, we define $\text{ex}(n, K_0, H) = 1$ and $\text{ex}(n, K_1, H) = n$. Additionally, for any integers $s> k>0$, we establish that $\binom{0}{s} = 1$ and $\binom{k}{s} = 0$. 
	 \begin{theorem}\label{theorem1s}
	 	Let $H$ be a connected graph on $m$ vertices, $k\geqslant 1$  and $n\geqslant km$.
	 	Then $\emph{ex}(n,K_s, k  H)\geqslant$
	 	$$\max \left\{\emph{ex}(n-km+1,K_s,  H)+\binom{km-1}{s},\sum_{i=0}^{s}\emph{ex}(n-k+1,K_i, H)\binom{k-1}{s-i}\right\}.$$
	 \end{theorem}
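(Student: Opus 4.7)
The plan is to prove the lower bound by exhibiting two explicit $kH$-free graphs on $n$ vertices, each realising one of the two quantities appearing in the stated maximum.

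For the first quantity, I would take the disjoint union $G_1 = G^\ast \sqcup K_{km-1}$, where $G^\ast$ is an $H$-free graph on $n - km + 1$ vertices that attains $\operatorname{ex}(n - km + 1, K_s, H)$ copies of $K_s$. Since $H$ is connected, every copy of $H$ in $G_1$ lies in a single component. The component $G^\ast$ is $H$-free, while the clique $K_{km-1}$, having only $km - 1$ vertices, contains at most $\lfloor (km-1)/m \rfloor = k - 1$ pairwise vertex-disjoint copies of $H$. Hence $G_1$ is $kH$-free, and its number of $K_s$'s equals $\operatorname{ex}(n - km + 1, K_s, H) + \binom{km-1}{s}$, since $K_s$'s split cleanly between the two components.

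For the second quantity, I would take the graph join $G_2 = G^\circ + K_{k-1}$, where $G^\circ$ is an $H$-free graph on $n - k + 1$ vertices and $K_{k-1}$ is a set of $k - 1$ new vertices joined to one another and to every vertex of $V(G^\circ)$. If $G_2$ contained $k$ pairwise vertex-disjoint copies of $H$, then since there are only $k - 1$ new vertices, pigeonhole forces at least one copy to avoid all of them, and so to be contained entirely in $G^\circ$, contradicting its $H$-freeness. Grouping the $s$-cliques of $G_2$ according to the number $i$ of their vertices in $V(G^\circ)$ yields the count $\sum_{i=0}^{s} N_i(G^\circ)\binom{k-1}{s-i}$, where $N_i(G^\circ)$ is the number of $K_i$ in $G^\circ$ (each $i$-subset of $V(G^\circ)$ spanning a $K_i$ combines with any $(s-i)$-subset of the new vertices to form a $K_s$, all cross-edges being present). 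Choosing $G^\circ$ so that $N_i(G^\circ) = \operatorname{ex}(n - k + 1, K_i, H)$ in the relevant range produces the claimed sum.

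The cleanest step is the first construction: connectedness of $H$ immediately forces every copy into a single component of the disjoint union, and then the count of $K_{km-1}$'s fit is purely arithmetic. The main obstacle lies in the second construction, namely securing a single $H$-free graph $G^\circ$ that simultaneously realises the extremal clique counts appearing in the sum; the low-$i$ terms are handled automatically by the conventions $\operatorname{ex}(n, K_0, H) = 1$ and $\operatorname{ex}(n, K_1, H) = n$, while the higher terms rely on the standard phenomenon in generalised Tur\'an problems that a canonical extremal $H$-free graph for $K_s$ also maximises the smaller clique counts. This is where the delicacy of the theorem resides.
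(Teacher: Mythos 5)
Your proposal is correct and essentially identical to the paper's proof: the paper likewise exhibits the two $kH$-free graphs $K_{km-1}\cup G_{\operatorname{ex}}(n-km+1,H)$ and $K_{k-1}+G_{\operatorname{ex}}(n-k+1,H)$ and counts $s$-cliques in each, using connectedness of $H$ for the disjoint union and the pigeonhole on the $k-1$ join vertices for the second graph. The one delicacy you flag --- that the join construction needs a single $H$-free graph on $n-k+1$ vertices attaining $\operatorname{ex}(n-k+1,K_i,H)$ for every $i\leqslant s$ simultaneously --- is genuine, but the paper does not address it either: it simply substitutes the edge-extremal graph $G_{\operatorname{ex}}(n-k+1,H)$ into the sum, so on this point your write-up is no less complete than the published argument.
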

	 \begin{proof}
The lower bound is derived by counting $s$-cliques in two graphs respectively: $K_{km-1} \cup G_{\operatorname{ex}}(n-km+1, H)$ and $K_{k-1} + G_{\operatorname{ex}}(n-k+1, H)$, both of which exclude $kH$ as a subgraph. Here, $K_{km-1} \cup G_{\operatorname{ex}}(n-km+1, H)$ represents the disjoint union of $K_{km-1}$ and $G_{\operatorname{ex}}(n-km+1, H)$, while $K_{k-1} + G_{\operatorname{ex}}(n-k+1, H)$ denotes the join (adding all possible edges between two graphs) of $K_{k-1}$ and $G_{\operatorname{ex}}(n-k+1, H)$.
 \end{proof}


	 \begin{theorem}\label{theorem1r}
	 	Let $H$ be a connected graph on $m\geqslant 3$ vertices and $k\geqslant 1$ be an integer.
	 	Then for $n\geqslant km$ and $ s\geqslant1$ we have
	 	$$\emph{ex}(n,K_s, k  H)\leqslant \sum_{i=0}^{s}\emph{ex}(n-(k-1)m,K_i,  H)\binom{(k-1)m}{s-i}.$$
	 \end{theorem}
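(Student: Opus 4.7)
The plan is to prove the upper bound by a single covering step rather than by induction on $k$. Let $G$ be any $kH$-free graph on $n \geq km$ vertices, and let $\mathcal{F}$ be a maximum collection of pairwise vertex-disjoint copies of $H$ in $G$. Since $G$ is $kH$-free, $|\mathcal{F}| \leq k-1$, so the set $W_0 := V(\mathcal{F})$ has $|W_0| \leq (k-1)m$, and by the maximality of $\mathcal{F}$ the induced subgraph $G - W_0$ contains no copy of $H$. Because $n \geq km$, I can enlarge $W_0$ arbitrarily to a set $W \subseteq V(G)$ of size exactly $(k-1)m$; since $G - W \subseteq G - W_0$, the graph $G - W$ is still $H$-free.

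Next I would count the $K_s$-subgraphs of $G$ by partitioning them according to $j := |V(T) \cap W|$ for each clique $T \cong K_s$ in $G$. Such a $T$ consists of a $j$-subset of $W$ together with an $(s-j)$-clique inside $G - W$, with all bipartite edges between the two parts present. Dropping the adjacency requirements inside $W$ and across the bipartition only loosens the count, yielding
\[
\#\{K_s \subseteq G\} \;\leq\; \sum_{j=0}^{s} \binom{(k-1)m}{j}\, \#\{K_{s-j} \subseteq G - W\}.
\]
Since $G - W$ is $H$-free on $n - (k-1)m$ vertices, the inner count is at most $\operatorname{ex}(n - (k-1)m, K_{s-j}, H)$, with the convention $\operatorname{ex}(\cdot, K_0, H) = 1$ handling the $j = s$ term. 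Reindexing via $i = s - j$ produces the stated bound exactly.

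The only subtle point is the padding step that enlarges $W_0$ up to a set of size exactly $(k-1)m$: this is what produces the clean binomial factor $\binom{(k-1)m}{s-i}$ independently of how many disjoint copies of $H$ the maximum family $\mathcal{F}$ actually contains (if $|\mathcal{F}| < k-1$ the naive counting gives a slightly different expression that must be massaged up to the one in the theorem). Everything else is routine clique counting, so I do not anticipate any real obstacle beyond this cosmetic adjustment.
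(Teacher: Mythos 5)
Your argument is correct, and it is a genuinely different route from the one in the paper. The paper proves Theorem \ref{theorem1r} by double induction on $k$ and $s$: it takes a minimal counterexample $G$ with the maximum number of edges (forcing $G\supseteq H$), deletes a single copy of $H$ to obtain a $(k-1)H$-free graph, bounds the cliques meeting $V(H)$, and then collapses the resulting double sum with Vandermonde's identity. You instead delete, in one step, the vertex set $W_0$ of a \emph{maximum} family of pairwise disjoint copies of $H$ (so $|W_0|\leqslant (k-1)m$ and $G-W_0$ is $H$-free by maximality), pad $W_0$ to a set $W$ of size exactly $(k-1)m$ using $n\geqslant km$, and classify each $s$-clique by its trace $j=|V(T)\cap W|$; the injection of such cliques into pairs (a $j$-subset of $W$, an $(s-j)$-clique of the $H$-free graph $G-W$) gives the bound directly, and the reindexing $i=s-j$ and the conventions for $\operatorname{ex}(\cdot,K_0,H)$ and for degenerate binomial coefficients are handled exactly as you say. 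Your approach buys several things: it needs no induction, no Vandermonde convolution, and no extremality assumption on $G$; it also never uses that $H$ is connected or that $m\geqslant 3$, so it proves a slightly more general statement. Your closing remark about the padding is also the right observation --- enlarging $W_0$ to size exactly $(k-1)m$ is what makes the binomial factor uniform and avoids a case analysis on $|\mathcal{F}|$. The only point worth stating explicitly in a final write-up is the injectivity of the correspondence $T\mapsto (V(T)\cap W,\ T-W)$, which is immediate.
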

	\remark For the case when $s=2$, our results coincide with Theorems 1 and 2 presented in Gorgol \cite{2011Gorgo661}.

\medskip

	For the notation $F_{k_1, \ldots , k_m}$, we define it as the linear forest comprising the union of paths $P_{k_1} \cup P_{k_2} \cup \ldots \cup P_{k_m}$. For $k_i=2$ over all $i\in [m]$, Wang \cite{2020Wang103057} accurately determined the exact value of $\operatorname{ex}(n,K_s, kP_2)$ in 2020. Zhu, Zhang, and Chen \cite{2021Zhu1437} determined the values of $\operatorname{ex}(n, K_s, F_{k_1, \ldots , k_m})$ for two cases: when $k_1 \leqslant k_2 + \ldots + k_m$ and when $k_1 \geqslant k_2 + \ldots + k_m + 2$, respectively, where $k_1, \ldots , k_m$ are all even numbers and $n$ is sufficiently large.
Expanding further, Zhu and Chen \cite{2022Zhu112997} established the value of $\operatorname{ex}(n, K_s, F_{k_1, \ldots , k_m})$ when $k_1 \geqslant \ldots \geqslant k_m \geqslant 4$, allowing $k_1, \ldots , k_m$ to include odd numbers, again for sufficiently large $n$. Zhang, Wang, and Zhou \cite{2022Zhang} precisely ascertained the values of the extremal function $\operatorname{ex}(n, K_s, \textit{L}_{n,k})$, where $\textit{L}_{n,k}$ represents the family of all linear forests of order $n$ containing exactly $k$ edges. More recently, Chen, Yang, Yuan, and Zhang \cite{2024Chen113974} determined $\operatorname{ex}(n, K_s, F_{k_1, \ldots , k_m})$ under the generalized condition that $k_1\geqslant \ldots \geqslant k_m \geqslant 2$, where $k_1, \ldots , k_m$ are even integers for all value of $n$.
Intriguingly, they also posited the following conjecture:
Let $M_n$ denote the graph consisting of $\lfloor n/2\rfloor$ independent edges and one possible isolated vertex.
For integer $n$, $k$ and $s$, we let
	$$f(n,k,s)=\binom{k-1}{s}+(n-k+1)\binom{k-1}{s-1}+\lfloor (n-k+1)/2\rfloor \binom{k-1}{s-2}.$$
		\begin{conjecture}\label{conjecture1}
		Let $n \geqslant 3 k$ and $s \geqslant 3$. Then 
$$
\operatorname{ex}(n, K_s, k P_3)=\max\left\{ {3k-1 \choose s}, f(n,k,s)\right\}.
$$
Moreover, 
each extremal graph $G$ satisfies the following:
\begin{itemize}
    \item $G$ is a subgraph of $K_{k-1}+M_{n-k+1}$ or it  satisfies that $K_{3k-1}\cup I_{n-3k+1}\subseteq G \subseteq K_{3k-1} \cup M_{n-3k+1}$ for $3\leqslant s\leqslant k+1$,
    \item $G$ satisfies that $K_{3k-1}\cup I_{n-3k+1}\subseteq G \subseteq K_{3k-1} \cup M_{n-3k+1}$ for $k+2\leqslant s\leqslant 3k-1$,
\end{itemize}
	where $I_{n-3k+1}$ contains $ n - 3k + 1 $ vertices such that no two vertices in $I_{n-3k+1}$ are connected by an edge.
\end{conjecture}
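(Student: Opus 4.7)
The plan is to verify the conjecture separately in each of the three regimes ($k=2,3$; $s\geqslant k+2$; and $n$ sufficiently large), all sharing a common lower bound from two explicit constructions. The graph $K_{3k-1}\cup M_{n-3k+1}$ is $kP_3$-free because its only nontrivial component has fewer than $3k$ vertices; for $s\geqslant 3$ the matching contributes no $s$-clique, giving $\binom{3k-1}{s}$ copies of $K_s$. The graph $K_{k-1}+M_{n-k+1}$ is $kP_3$-free because every $P_3$ must use at least one vertex of the join part (the matching alone contains no $P_3$), so $k-1$ join vertices cannot support $k$ disjoint $P_3$'s. Splitting its $s$-cliques by how many vertices come from $K_{k-1}$ (either $s$, $s-1$, or $s-2$, the last case requiring the remaining two vertices to form an edge of $M_{n-k+1}$) yields exactly $f(n,k,s)$ copies of $K_s$.

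For the upper bound when $s\geqslant k+2$, the quantity $f(n,k,s)$ vanishes since $\binom{k-1}{s-2}=0$, so the target becomes $\operatorname{ex}(n,K_s,kP_3)\leqslant\binom{3k-1}{s}$. I propose induction on $k$. The base $k=1$ is immediate because a $P_3$-free graph is a matching, hence has no $K_s$ for $s\geqslant 3$. For the step, given a $kP_3$-free graph $G$, either $G$ is $P_3$-free (done) or we fix a $P_3$ with vertex set $T$; then $G-T$ is $(k-1)P_3$-free and contains at most $\binom{3k-4}{s}$ copies of $K_s$ by the inductive hypothesis (applicable since $s\geqslant k+2\geqslant(k-1)+2$). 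It remains to bound the $K_s$'s meeting $T$; by Pascal the desired surplus $\binom{3k-1}{s}-\binom{3k-4}{s}$ equals the number of $s$-subsets of a $(3k-1)$-set that intersect a fixed $3$-subset. This suggests showing that every $K_s$ meeting $T$ lies in a common clique-envelope of size at most $3k-1$, a step we plan to establish by a Hall/defect-type matching argument: failure of this envelope property would let us pair up independent extensions of $T$ into $k$ vertex-disjoint $P_3$'s.

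For the small cases, $k=2$ reduces to the remaining subcase $s=3$ (the cases $s\geqslant 4$ fall under the $s\geqslant k+2$ umbrella), and $k=3$ reduces to $s\in\{3,4\}$. When $k=2$, removing any $P_3$ from $G$ leaves a matching, so the structure of $G$ is essentially ``$K_5$ with a matching attached'' or ``a single vertex joined to a matching,'' and a direct case analysis by $K_s$-distribution across these pieces gives the bound together with the extremal characterization. For $k=3$ we bootstrap: extract a $P_3$ whose removal maximally decreases the $K_s$-count, apply the $k=2$ bound to the remainder, and compare against $\binom{8}{s}$ and $f(n,3,s)$. For $n$ sufficiently large, we use a stability strategy. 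Theorem \ref{theorem1r} yields an upper bound that is essentially linear in $n$ unless $G$ contains a clique of size close to $3k-1$; a clique-concentration step then pins $G$ down as a subgraph of either $K_{3k-1}\cup M_{n-3k+1}$ or $K_{k-1}+M_{n-k+1}$, after which the exact count and extremal classification follow.

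The principal obstacle I expect is the clique-envelope step in the induction for $s\geqslant k+2$: proving that if many $K_s$'s meet a fixed $P_3$ in $G$, then $kP_3\subseteq G$, without losing the factor $\binom{3k-1}{s}-\binom{3k-4}{s}$. A naive codegree estimate falls short of this gap, so a Bollob\'as set-pair inequality or a careful switching/augmenting-path argument will likely be required. A secondary difficulty is the optimal choice of $P_3$ to remove in the $k=3$ and large-$n$ cases, where a suboptimal choice can leak $K_s$'s across the removed set and inflate the bound.
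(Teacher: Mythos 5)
Your overall architecture (two lower-bound constructions, induction on $k$ for $s\geqslant k+2$ via the Pascal identity $\binom{3k-1}{s}-\binom{3k-4}{s}=3\binom{3k-4}{s-1}+3\binom{3k-4}{s-2}+\binom{3k-4}{s-3}$, separate treatment of small $k$ and of large $n$) matches the paper's at a high level, but the proposal leaves the decisive steps unproved, and in two places points toward machinery that would not close them. The main gap is exactly the one you flag: for $s\geqslant k+2$ you must show that all $s$-cliques of $G$ live inside a set of at most $3k-1$ vertices. The paper does not need a Bollob\'as set-pair inequality or an augmenting-path argument here; it chooses the copy of $(k-1)P_3$, call it $H$, so that $G-V(H)$ has the maximum number of edges (hence $G-V(H)$ is a union of isolated vertices and independent edges), and then proves an elementary adjacency lemma (Lemma \ref{lemma2.1}): two isolated vertices of $G-V(H)$ touching the same path $x_iy_iz_i$ can only attach at $y_i$, and an edge with two attachments to $x_iy_iz_i$ excludes every other edge, and all but one isolated vertex, from attaching. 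Combined with $s-2\geqslant k$ (which forces every $s$-clique to meet some $Q_i$ in at least two vertices), this pins the set $X$ of outside vertices appearing in any $K_s$ down to a single edge, giving the envelope of size $3k-1$ directly. Without this specific choice of $H$ and this lemma, your ``clique-envelope'' step is an unproved assertion, and it is where all the content of that case lies.

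The second gap is the large-$n$ regime. Theorem \ref{theorem1r} gives an upper bound whose linear-in-$n$ coefficient is on the order of $\binom{3k-3}{s-1}$, whereas $f(n,k,s)$ grows like $(n-k+1)\binom{k-1}{s-1}$; the gap between these coefficients is large, so no ``clique-concentration'' corollary of Theorem \ref{theorem1r} can pin $G$ down to $K_{k-1}+M_{n-k+1}$. The paper instead chooses the $(k-1)P_3$ to maximize the number of triangles among its paths, splits the paths into those incident with at least four components of the leftover graph ($\mathcal{Q}_1$) versus at most three ($\mathcal{Q}_2$), proves that each path in $\mathcal{Q}_1$ has a single dominating vertex absorbing almost all attachments (Lemmas \ref{cor-1}--\ref{lem-in}), and only then counts; the contribution of $\mathcal{Q}_2$ is bounded independently of $n$ and is beaten by the linear term once $n\geqslant g(k,s)$. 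You would need an argument of this kind, plus a separate treatment of $s=k+1$, which your proposal omits entirely and which the paper handles by its own induction on $k$. Finally, your $k=3$ ``bootstrap'' is far from routine: the paper needs a longest-path analysis together with Luo's bound on cliques in connected graphs with no long path to rule out paths on at most six vertices before its case analysis even begins.
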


In this paper, we provide a resolution to the conjecture for several cases.

\begin{theorem}\label{theorem3.4}
Conjecture \ref{conjecture1} holds when $k=2,3$.
\end{theorem}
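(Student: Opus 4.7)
The plan is to prove the upper bound matching the two lower-bound constructions $K_{3k-1}\cup M_{n-3k+1}$ and $K_{k-1}+M_{n-k+1}$; the lower bound is immediate from direct counting and verifying $kP_3$-freeness (the first by vertex-count, the second because any $P_3$ must use at least one of the $k-1$ dominating vertices, since $M_{n-k+1}$ itself is $P_3$-free). For $k\in\{2,3\}$ I would proceed by a two-level case analysis driven by the maximum $P_3$-packing number of $G$ and by the largest clique in $G$.

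For $k=2$, let $G$ be a $2P_3$-free graph on $n\geqslant 6$ vertices. If $G$ is $P_3$-free then $G$ is a matching and contributes $0$ copies of $K_s$ for $s\geqslant 3$. Otherwise fix a $P_3$ on $A=\{v_1,v_2,v_3\}$; by maximality of the packing, $G-A$ is a matching. The key clique-size restrictions follow quickly: $G$ is $K_6$-free (since $K_6\supseteq 2P_3$), $G$ has at most one $K_5$ (two distinct $K_5$'s share at most $4$ vertices and produce disjoint $P_3$'s), and any $K_5$ of $G$ must be a connected component (an outside neighbour $u$ of $v\in V(K_5)$ yields $P_3=uvw$ disjoint from a $P_3$ on the three remaining vertices of $K_5$). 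If $G$ contains $K_5$, then $G=K_5\cup M'$ and $N(K_s,G)=\binom{5}{s}$ for $s\geqslant 3$. If $G$ is $K_5$-free, a parallel argument shows that any two distinct $K_4$'s share exactly $3$ vertices, that three $K_4$'s sharing a common triangle already force $2P_3$, and that three $K_4$'s pairwise sharing different triangles force a $K_5$; hence $N(K_4,G)\leqslant 2$ and $N(K_s,G)=0$ for $s\geqslant 5$. For $s=3$, one counts $K_3$-copies by their intersection with $A$ and $B$, using the matching structure of $G[B]$ and the $2P_3$-free constraint on cross-edges to match the bound $\max\{\binom{5}{3},f(n,2,3)\}$.

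For $k=3$ the argument is analogous but more delicate. If $G$ is already $2P_3$-free, invoke the $k=2$ result together with the monotonicity $\max\{\binom{5}{s},f(n,2,s)\}\leqslant \max\{\binom{8}{s},f(n,3,s)\}$. Otherwise fix a $2P_3$-packing on a $6$-set $A$; then $G-A$ is a matching. Parallel clique-size restrictions give $G$ is $K_9$-free, has at most one $K_8$, and any $K_8$ is a connected component (an outside neighbour combined with a $2P_3$-packing on the remaining five vertices of $K_8$ yields $3P_3$). The $K_8$-containing subcase gives $G=K_8\cup M'$ and $N(K_s,G)=\binom{8}{s}$; the $K_8$-free subcase is bounded by $f(n,3,s)$ through a cross-edge analysis parallel to the $k=2$ case.

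The main obstacle is the no-large-clique subcase, where one must match $f(n,k,s)$ by a careful accounting of cross-edges between the $P_3$-packing set $A$ and the matching $B$: \emph{too many} cross-edges force the forbidden $kP_3$, while each permitted cross-edge can create several $K_s$-copies, so the final count must reproduce $f(n,k,s)$ term-by-term (the three terms corresponding to cliques inside a $(k-1)$-dominating set, cliques extending such a clique by a single matched vertex, and cliques extending it by a full matching edge). The uniqueness assertion in the conjecture further requires tracking equality throughout, in particular distinguishing, depending on the regime of $s$ relative to $k$, whether the extremal $G$ lies inside $K_{3k-1}\cup M_{n-3k+1}$ or inside $K_{k-1}+M_{n-k+1}$.
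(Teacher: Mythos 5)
Your $k=2$ outline is essentially sound and, while organized differently from the paper (the paper deletes vertices not lying in any $K_s$ and classifies how triangles intersect, arriving at $K_5$ or a fan; you fix a $P_3$-packing $A$ and exploit that $G-A$ is a matching), both routes are workable there. The problem is the $k=3$ case, where your stated dichotomy is false. You claim that once $G$ contains a $2P_3$ and is $K_8$-free, the count is ``bounded by $f(n,3,s)$ through a cross-edge analysis parallel to the $k=2$ case.'' But $K_7\cup M_{n-7}$ is $3P_3$-free and $K_8$-free, and it has $\binom{7}{3}=35$ triangles, whereas $f(n,3,3)=(n-2)+2\lfloor (n-2)/2\rfloor<35$ for all $n\leqslant 19$; likewise $\binom{7}{4}=35>f(n,3,4)=\lfloor(n-2)/2\rfloor$ for all $n\leqslant 71$. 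The same happens with $K_6$ and $K_5$ components for smaller $n$. So the $K_8$-free case is emphatically not bounded by $f(n,3,s)$; it is only bounded by $\max\{\binom{8}{s},f(n,3,s)\}$, and proving that with the required strictness (so that the extremal graphs are exactly those in the conjecture) is precisely the hard content of the theorem. Your sketch leaves exactly that part unaddressed: one must rule out all the ``intermediate'' configurations --- a clique on $5$ to $7$ vertices attached to the rest of the graph, a $K_6$ inside $A$ with cross-edges to the matching, two overlapping dense pieces, etc. --- and show none of them is extremal.

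A second, related weakness: for $k=3$ the set $A$ spans six vertices whose induced subgraph can be anything from $2P_3$ up to $K_6$, and an $s$-clique may use up to six vertices of $A$ together with a matching edge of $B$, so the ``cross-edge analysis'' is not at all parallel to the three-vertex case; the constraints imposed by $3P_3$-freeness on edges between $A$ and $B$ interact with the internal structure of $G[A]$ in many ways. This is why the paper takes a different route for $k=3$: it reduces to a connected graph, bounds the longest path by $8$, invokes Luo's clique-counting theorem for connected graphs with no $P_7$ to kill the short-path case, and then runs a lengthy case analysis on a longest path $x_1\cdots x_7$ or $x_1\cdots x_8$ to force $G'=K_2+M_{n-2}$ or $K_8$ plus a matching. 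Without some tool of comparable strength (Luo's theorem, or an explicit enumeration of all $3P_3$-free, $K_8$-free configurations on $A\cup B$), your plan does not close.
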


\begin{theorem}\label{theorem3.22}
		Conjecture \ref{conjecture1} holds when  $s\geqslant k+2$.
\end{theorem}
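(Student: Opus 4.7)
The plan is to show that for $n\geq 3k$ and $s\geq k+2$ one has $\operatorname{ex}(n,K_s,kP_3)=\binom{3k-1}{s}$, with extremal graphs exactly those $G$ satisfying $K_{3k-1}\cup I_{n-3k+1}\subseteq G\subseteq K_{3k-1}\cup M_{n-3k+1}$. First observe that when $s\geq k+2$, each of $\binom{k-1}{s},\binom{k-1}{s-1},\binom{k-1}{s-2}$ vanishes, so $f(n,k,s)=0$ and the target reduces to $\binom{3k-1}{s}$. For the lower bound, $K_{3k-1}\cup M_{n-3k+1}$ is $kP_3$-free, since every $P_3$ lives inside $K_{3k-1}$ and $K_{3k-1}$ admits only $k-1$ disjoint $P_3$'s; it has exactly $\binom{3k-1}{s}$ copies of $K_s$.

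For the upper bound I induct on $k$. The base case $k=1$ is trivial since a $P_3$-free graph is triangle-free. In the inductive step, let $G$ be $kP_3$-free and write $c_s(G)$ for its number of $K_s$ copies. If the maximum $P_3$-packing in $G$ has size at most $k-2$, then $G$ is $(k-1)P_3$-free and the inductive hypothesis (valid since $s\geq k+2\geq(k-1)+2$) gives $c_s(G)\leq\binom{3k-4}{s}\leq\binom{3k-1}{s}$. Otherwise, fix a maximum $P_3$-packing $F$ of size $k-1$, set $X=V(F)$ with $|X|=3k-3$ and $Y=V(G)\setminus X$; maximality forces $G[Y]$ to be $P_3$-free, hence a matching plus isolates. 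Since $K_{3k}$ contains $kP_3$ we have $\omega(G)\leq 3k-1$, and the argument splits on $\omega(G)$.

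Suppose $\omega(G)=3k-1$. Let $X^{*}=V(K_{3k-1})$, $Y^{*}=V(G)\setminus X^{*}$. Any edge $xy$ between $X^{*}$ and $Y^{*}$ would, together with any $x'\in X^{*}\setminus\{x\}$, give a $P_3$ on $y,x,x'$ disjoint from $k-1$ $P_3$'s inside the $K_{3k-3}$ on $X^{*}\setminus\{x,x'\}$, producing $kP_3$. Hence $G=K_{3k-1}\cup G[Y^{*}]$ with $G[Y^{*}]$ $P_3$-free; so $K_{3k-1}\cup I_{n-3k+1}\subseteq G\subseteq K_{3k-1}\cup M_{n-3k+1}$ and $c_s(G)=\binom{3k-1}{s}$. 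Suppose $\omega(G)=3k-2$. With $X^{*}=V(K_{3k-2})$ and $Y^{*}=V(G)\setminus X^{*}$, for each $x\in X^{*}$ the set $X^{*}\setminus\{x\}=K_{3k-3}$ splits into $k-1$ disjoint $P_3$'s, so $G[Y^{*}\cup\{x\}]$ must be $P_3$-free. This yields (i) each $x\in X^{*}$ has at most one neighbor in $Y^{*}$; (ii) if $xy\in E$ with $y\in Y^{*}$, then $y$ has no other neighbor in $Y^{*}$. Consequently $\sum_{y\in Y^{*}}|N(y)\cap X^{*}|\leq 3k-2$, no $x$ is a common neighbor of both endpoints of any edge of $G[Y^{*}]$, and each $|N(y)\cap X^{*}|\leq 3k-3$. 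Decomposing the count of $K_s$'s by how many vertices fall in $Y^{*}$, and using convexity of $\binom{\cdot}{s-1}$ to bound the middle sum by $\binom{3k-3}{s-1}$, one obtains $c_s(G)\leq\binom{3k-2}{s}+\binom{3k-3}{s-1}<\binom{3k-2}{s}+\binom{3k-2}{s-1}=\binom{3k-1}{s}$ by Pascal's identity.

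The remaining case $\omega(G)\leq 3k-3$ is the main obstacle: the leftover-vertex trick above is unavailable because no clique of $G$ is large enough to host a $(k-1)P_3$-packing plus one extra vertex. Here one must exploit the packing $F$ on $X$ directly. For any $y\in Y$ with a neighbor $x\in X$, replacing the $P_3\in F$ through $x$ by one incorporating $y$ frees some $w\in X$, and $kP_3$-freeness then forces $G[(Y\setminus\{y\})\cup\{w\}]$ to be $P_3$-free, yielding strong local constraints on $w$. Iterating this for the three possible positions of $x$ within its $P_3$, combining with the clique bound $|N(y)\cap X|\leq\omega-1\leq 3k-4$, and splitting further on whether $G[X]=K_{3k-3}$ or $G[X]$ contains a non-edge, should restrict the number of $y\in Y$ with $|N(y)\cap X|$ large enough to contribute to $c_s(G)$; a convexity and double-counting argument then gives $c_s(G)\leq\binom{3k-3}{s}+2\binom{3k-3}{s-1}+\binom{3k-3}{s-2}=\binom{3k-1}{s}$, strictly since $\omega(G)<3k-1$. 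Carrying out this bookkeeping—keeping track of which $w$ is freed under each modification and translating the resulting structural information into a tight bound on the cross sums—is the delicate technical step on which the proof turns.
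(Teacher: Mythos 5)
Your lower bound and the reduction of the target to $\binom{3k-1}{s}$ (since $\binom{k-1}{s-2}=\binom{k-1}{s-1}=\binom{k-1}{s}=0$ for $s\geqslant k+2$) are fine, and your two easy cases are correct: when $\omega(G)=3k-1$ the cross-edges argument forces $G=K_{3k-1}\cup G[Y^{*}]$ with $G[Y^{*}]$ a matching, and when $\omega(G)=3k-2$ your observation that $G[Y^{*}\cup\{x\}]$ is $P_3$-free for every $x\in X^{*}$ does yield $c_s(G)\leqslant\binom{3k-2}{s}+\binom{3k-3}{s-1}<\binom{3k-1}{s}$. However, the case $\omega(G)\leqslant 3k-3$ is not a proof but a plan: you assert that iterating the path-swapping trick ``should restrict'' the relevant neighbourhoods and that ``a convexity and double-counting argument then gives'' the bound $\binom{3k-3}{s}+2\binom{3k-3}{s-1}+\binom{3k-3}{s-2}=\binom{3k-1}{s}$, ``strictly since $\omega(G)<3k-1$.'' None of this is established: you never prove the structural constraints on which vertices of $Y$ can attach to which positions of the packing, never bound the sums $\sum_{y}\binom{|N(y)\cap X|}{s-1}$ and $\sum_{yy'}\binom{|N(y)\cap N(y')\cap X|}{s-2}$, and give no argument for the claimed strictness. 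Since this is exactly the regime where all the work lies, the proof is incomplete.

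For comparison, the paper does not split on $\omega(G)$ at all. It fixes $G$ extremal for the number of $s$-cliques and, subject to that, with the maximum number of edges (guaranteeing a $(k-1)P_3$), chooses the packing $H$ so that $G'=G-V(H)$ has the most edges, and proves a local lemma (Lemma 2.1) describing how isolated vertices and edges of $G'$ may attach to a single path $x_1y_1z_1$. The hypothesis $s\geqslant k+2$ enters via pigeonhole: every $K_s$ has at most two vertices outside $V(H)$, hence at least $k$ vertices among the $k-1$ paths, hence two vertices on a common path; Lemma 2.1 then confines all $s$-cliques through that path to $G[V(H)\cup\{u,w\}]$, and induction on $k$ applied to $G-x_1y_1z_1$ (which is $(k-1)P_3$-free) closes the count with $\binom{3k-4}{s}+3\binom{3k-4}{s-1}+3\binom{3k-4}{s-2}+\binom{3k-4}{s-3}=\binom{3k-1}{s}$. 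Your sketch for the hard case gestures at the same swapping idea, but to complete your proof you would essentially have to prove an analogue of Lemma 2.1 and carry out this counting; as written, the argument stops exactly where the paper's begins.
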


For integers $k,s$, we let
$$g(k,s) = \left( \binom{k-2}{s-2}^{-1}   \max \left\{ \binom{3k-3}{x} : x \in \{s, s-1, s-2\} \right\} \right) (9k-8) + k + 1.$$
\begin{theorem}\label{thm}
	Conjecture \ref{conjecture1} holds when $3\leqslant s \leqslant k$ and $n\geqslant\max\{g(k,s),3k-1\} $.
\end{theorem}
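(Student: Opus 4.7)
The plan is to establish the matching upper bound $\operatorname{ex}(n,K_s,kP_3)\leqslant \max\{\binom{3k-1}{s},f(n,k,s)\}$; the lower bound follows from counting $K_s$'s in the two $kP_3$-free graphs $K_{3k-1}\cup I_{n-3k+1}$ and $K_{k-1}+M_{n-k+1}$. The threshold $g(k,s)$ is engineered precisely so that, for $n\geqslant g(k,s)$, the linear-in-$n$ portion of $f(n,k,s)$, namely $(n-k+1)\binom{k-1}{s-1}+\lfloor(n-k+1)/2\rfloor\binom{k-1}{s-2}$, exceeds $\binom{3k-1}{s}$ plus a constant-order error of size $(9k-8)\cdot\max\{\binom{3k-3}{x}:x\in\{s,s-1,s-2\}\}$ arising from the counting step below. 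In particular $f(n,k,s)\geqslant\binom{3k-1}{s}$ in this regime, so it suffices to show $N_s(G)\leqslant f(n,k,s)$ for an extremal $G$.

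Let $G$ be a $kP_3$-free graph on $n$ vertices maximizing the number of $K_s$'s. If $G$ contains no $(k-1)P_3$, then by Theorem \ref{theorem3.4} (if $k-1\in\{2,3\}$), by Theorem \ref{thm} inductively (if $3\leqslant s\leqslant k-1$), or by a direct application of Theorem \ref{theorem1r} with $H=P_3$ for the residual case $s=k$, we get $N_s(G)\leqslant\max\{\binom{3k-4}{s},f(n,k-1,s)\}<f(n,k,s)$ when $n\geqslant g(k,s)$. So we may assume $G$ contains a collection $\mathcal{P}=\{P^1,\dots,P^{k-1}\}$ of $k-1$ vertex-disjoint copies of $P_3$; pick such a $\mathcal{P}$ maximum, and then to additionally maximize a secondary quantity (e.g.\ the number of edges between $R:=V(G)\setminus U$ and the endpoints of the paths, where $U:=\bigcup_i V(P^i)$ has $|U|=3k-3$). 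By maximality of $|\mathcal{P}|$, $G[R]$ contains no $P_3$ and is therefore a matching.

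The technical core of the proof, and the main obstacle, is the following augmenting lemma: \emph{the set $R_{\mathrm{heavy}}:=\{v\in R:d_U(v)\geqslant k\}$ has size at most $9k-8$.} The argument is by contradiction. Each heavy vertex has at least $k$ neighbors among the $3k-3$ vertices of $U$, so by pigeonhole it is adjacent to at least two vertices of some $P^i$; its \emph{attachment pattern} records $i$ and the 2-subset of $V(P^i)$ it hits, producing at most $3(k-1)$ patterns, with a further refinement (distinguishing heavy vertices that attach to all three vertices of a $P^i$) that pushes the combined count to at most $9k-8$. If $|R_{\mathrm{heavy}}|\geqslant 9k-7$, three heavy vertices $v_1,v_2,v_3$ share a pattern, say $\{a,b\}\subset V(P^i)$ with $P^i=abc$; then the two $P_3$'s $v_1{-}a{-}v_2$ and $v_3{-}b{-}c$ are vertex-disjoint and, together with the $k-2$ untouched paths $P^j$, produce $k$ vertex-disjoint copies of $P_3$ in $G$, contradicting $kP_3$-freeness.

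With the lemma in hand, every $K_s$ in $G$ has at most two vertices in $R$ (since $G[R]$ is triangle-free), so
\[
N_s(G)\leqslant \binom{3k-3}{s}+\sum_{v\in R}\binom{d_U(v)}{s-1}+\sum_{vw\in E(G[R])}\binom{|N(v)\cap N(w)\cap U|}{s-2}.
\]
Split $R=R_{\mathrm{light}}\sqcup R_{\mathrm{heavy}}$. The light-vertex contributions (including matched pairs with both endpoints light) are bounded by $(n-k+1)\binom{k-1}{s-1}+\lfloor(n-k+1)/2\rfloor\binom{k-1}{s-2}$, while the $\binom{3k-3}{s}$ term and the heavy-vertex contributions (plus matched pairs with a heavy endpoint) sum to at most $\binom{3k-3}{s}+(9k-8)\bigl(\binom{3k-3}{s-1}+\binom{3k-3}{s-2}\bigr)$. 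The hypothesis $n\geqslant g(k,s)$ is exactly what is needed to absorb this constant-order error into the slack between $f(n,k,s)$ and its pure linear-in-$n$ portion, yielding $N_s(G)\leqslant f(n,k,s)$. Finally, tracing the equalities forces every light vertex of $R$ to share a fixed $(k-1)$-clique inside $U$ as its $U$-neighborhood, which identifies $G$ as a subgraph of $K_{k-1}+M_{n-k+1}$, matching the extremal structure predicted by Conjecture \ref{conjecture1}.
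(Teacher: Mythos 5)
Your overall architecture (fix a maximum family of $k-1$ disjoint $P_3$'s with vertex set $U$, note that $G[R]$ is a matching, and split the leftover vertices by degree into $U$) parallels the paper's setup, and your augmenting lemma bounding $|R_{\mathrm{heavy}}|$ by a pigeonhole on attachment patterns is sound. But the final counting step has a genuine gap that no choice of threshold on $n$ can repair. Your upper bound is
\[
\binom{3k-3}{s}+(n-k+1)\binom{k-1}{s-1}+\left\lfloor\tfrac{n-k+1}{2}\right\rfloor\binom{k-1}{s-2}+(9k-8)\left[\binom{3k-3}{s-1}+\binom{3k-3}{s-2}\right],
\]
and this exceeds $f(n,k,s)$ by the positive constant $\binom{3k-3}{s}-\binom{k-1}{s}+(9k-8)[\cdots]$ for \emph{every} $n$, since both quantities grow linearly in $n$ with the same slope: the light vertices already saturate the entire linear part of $f(n,k,s)$, so the ``slack between $f(n,k,s)$ and its pure linear-in-$n$ portion'' that you invoke is only the constant $\binom{k-1}{s}$, far too small to absorb your $\binom{3k-3}{s}$-sized error. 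The hypothesis $n\geqslant g(k,s)$ buys you nothing here because increasing $n$ increases both sides equally.

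The paper escapes this trap by making the dichotomy on the \emph{paths} rather than on the vertices of $R$: it splits $\mathcal{Q}$ into $\mathcal{Q}_1$ (paths meeting at least four components of $G'$) and $\mathcal{Q}_2$ (at most three). For $Q_i\in\mathcal{Q}_1$ a structural lemma forces a single ``dominating vertex'' to carry essentially all attachments, and the residual vertices induce a $P_3$-free graph; consequently cliques inside $G[V(\mathcal{Q}_1)\cup V(G')]$ are counted against $\binom{|\mathcal{Q}_1|}{\cdot}$ rather than $\binom{3k-3}{\cdot}$, which is exactly what eliminates your oversized $\binom{3k-3}{s}$ term when $\mathcal{Q}_2=\emptyset$. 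When $\mathcal{Q}_2\neq\emptyset$, the crude constant bound $(1+9|\mathcal{Q}_2|)f(k,s)$ is affordable precisely because then $|\mathcal{Q}_1|\leqslant k-2$, so the linear coefficient drops from $\binom{k-1}{s-1}$ to $\binom{k-2}{s-1}$ and one gains a genuinely linear-in-$n$ savings $(n-k-1)\binom{k-2}{s-2}$; that is the quantity $g(k,s)$ is calibrated against. Your light/heavy split creates no such linear savings. To repair your argument you would need an analogue of the dominating-vertex lemma showing that the clique-rich core inside $U$ has only $k-1$ vertices (and, relatedly, your final sentence claiming that ``tracing the equalities'' identifies $G\subseteq K_{k-1}+M_{n-k+1}$ is asserted without proof and is in fact where the real work lies). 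A secondary, smaller issue: your treatment of the case where $G$ has no $(k-1)P_3$ via Theorem \ref{theorem1r} does not give a bound below $f(n,k,s)$ for $s=k$; the paper instead simply adds edges until a $(k-1)P_3$ appears while preserving $kP_3$-freeness.
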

\begin{theorem}\label{theorem3.1}
	Conjecture \ref{conjecture1} holds when $s=k+1$ and  $n\geqslant 6\binom{3k-1}{k}+k-3$.
\end{theorem}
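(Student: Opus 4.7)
The plan is to proceed by contradiction and by case analysis on whether $G$ contains a copy of $K_{3k-1}$. Suppose $G$ is an $n$-vertex $kP_3$-free graph with $N(K_{k+1},G)>\max\{\binom{3k-1}{k+1},f(n,k,k+1)\}$. Using $\binom{3k-1}{k+1}=\frac{2k-1}{k+1}\binom{3k-1}{k}\leqslant 2\binom{3k-1}{k}$, the hypothesis $n\geqslant 6\binom{3k-1}{k}+k-3$ forces $f(n,k,k+1)=\lfloor(n-k+1)/2\rfloor\geqslant\binom{3k-1}{k+1}$, so I must derive $N(K_{k+1},G)\leqslant\lfloor(n-k+1)/2\rfloor$. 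I may also assume $k\geqslant 3$, since $k=2$ is already covered by Theorem~\ref{theorem3.4}.

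First, suppose $G$ contains $K_{3k-1}$ on vertex set $A$. Since $A$ has $3k-1$ vertices, it contains $k-1$ vertex-disjoint $P_3$'s that use $3k-3$ vertices. If $G-A$ contained any $P_3$, adjoining it to these interior $P_3$'s would yield $kP_3$, a contradiction; hence $G-A$ is $P_3$-free. Similarly, if some $v\notin A$ had two neighbors $u_1,u_2\in A$, then $u_1vu_2$ together with $k-1$ disjoint $P_3$'s in $G[A\setminus\{u_1,u_2\}]\cong K_{3k-3}$ would produce $kP_3$. So each vertex outside $A$ has at most one neighbor in $A$, and since $k\geqslant 3$ no $K_{k+1}$ can use any vertex outside $A$ (such a clique would force some outside vertex to have at least two neighbors in $A$, or else require an entire $K_3$ inside the $P_3$-free graph $G-A$, both impossible). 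Therefore $N(K_{k+1},G)=\binom{3k-1}{k+1}\leqslant\lfloor(n-k+1)/2\rfloor$, contradicting the assumption.

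Second, suppose $G$ contains no $K_{3k-1}$. The plan is strong induction on $n$. The key input is that for any $K_{k+1}$ subgraph $C_0$ of $G$, the graph $G-V(C_0)$ must be $(k-1)P_3$-free: any $k-1$ disjoint $P_3$'s in $G-V(C_0)$ together with one $P_3$ inside $C_0$ would produce $kP_3$. Because $s=k+1\geqslant(k-1)+2$ and $f(n',k-1,k+1)=0$ (as $\binom{k-2}{k-1}=\binom{k-2}{k}=\binom{k-2}{k+1}=0$), Theorem~\ref{theorem3.22} yields $N(K_{k+1},G-V(C_0))\leqslant\binom{3k-4}{k+1}$, a constant depending only on $k$. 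For the induction, let $v$ be a vertex lying in the minimum number of $K_{k+1}$'s of $G$: if this number is $0$, then $G-v$ is $kP_3$-free on $n-1$ vertices and still has $N(K_{k+1},G)$ copies of $K_{k+1}$, so induction immediately gives $N(K_{k+1},G)\leqslant f(n-1,k,k+1)\leqslant f(n,k,k+1)$.

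The main obstacle is the subcase where every vertex of $G$ belongs to at least one $K_{k+1}$. In this dense regime, I would identify a ``core'' set $A$ of $k-1$ vertices that is contained in essentially every $K_{k+1}$, after which the tiling argument of the first case (with $A$ now playing the role of $K_{3k-1}$) should force $G-A$ to be $P_3$-free: provided the common neighborhood $N(A)$ contains a matching $x_1y_1,\dots,x_{k-1}y_{k-1}$, any $P_3$ in $G-A$ combines with the $k-1$ paths $y_ix_iv_i$ (where $v_i\in A$) to yield $kP_3$. Constructing $A$ and verifying its matching structure is where the quantitative hypothesis $n\geqslant 6\binom{3k-1}{k}+k-3$ becomes essential; this averaging/pigeonhole step, which must exploit both the $K_{3k-1}$-freeness and the lower bound $N(K_{k+1},G)>\lfloor(n-k+1)/2\rfloor$, is the technical heart of the argument. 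Once $G\subseteq K_{k-1}+M_{n-k+1}$ is established, a direct count of $K_{k+1}$'s completes the proof.
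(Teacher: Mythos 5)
Your Case 1 (when $G$ contains a $K_{3k-1}$) is a correct and self-contained reduction, and your observation that Theorem \ref{theorem3.22} applied to $(k-1)P_3$ with $s=k+1\geqslant (k-1)+2$ bounds $\mathcal{N}(G-V(C_0),K_{k+1})$ by the constant $\binom{3k-4}{k+1}$ is exactly the right tool. But the argument stops precisely where the real work begins: in the dense subcase you only assert that one should ``identify a core set $A$ of $k-1$ vertices contained in essentially every $K_{k+1}$'' and call the construction of $A$ ``the technical heart of the argument'' without carrying it out. This is a genuine gap, not a routine verification. Moreover, the shape of the claim you aim for is stronger than what is needed and harder to prove: you want $A$ to lie in essentially every $(k+1)$-clique and you additionally need (and do not verify) that the common neighborhood of $A$ contains a matching of size $k-1$. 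Note also that your strong induction on $n$ does not help here, since the base case $n=6\binom{3k-1}{k}+k-3$ is itself the hard dense case.

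The paper fills this gap differently and more economically. By edge-maximality $G$ contains a fixed copy of $(k-1)P_3$, say $H_1$. For each constituent path $P$ of $H_1$, the graph $G-P$ is $(k-1)P_3$-free, so by the induction hypothesis on $k$ all but a bounded number of $(k+1)$-cliques of $G$ meet $P$; since $\mathcal{N}(G,K_{k+1})>f(n,k,k+1)=\lfloor (n-k+1)/2\rfloor$, each $P$ meets at least roughly $n/2$ cliques, and by pigeonhole some vertex of $P$ lies in at least a third of them. Because a vertex lying in $N$ copies of $K_{k+1}$ has degree at least $k$-th-root-type growth (concretely, $\binom{\deg v}{k}\geqslant N$ forces $\deg v\geqslant 3k-1$ once $N\geqslant\binom{3k-1}{k}$, which is where the hypothesis $n\geqslant 6\binom{3k-1}{k}+k-3$ enters), the resulting set $U$ of $k-1$ chosen vertices consists of vertices of degree at least $3k-1$. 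There is then no need for $U$ to be in every clique nor for a matching in a common neighborhood: if $G-U$ contained a $P_3$, one greedily builds $k-1$ disjoint cherries centered at the vertices of $U$ inside their (large) individual neighborhoods, avoiding the at most $3+3(k-2)+(k-1)$ vertices already used, which yields a $kP_3$. Hence $G-U$ is $P_3$-free and edge-maximality gives $G=K_{k-1}+M_{n-k+1}$. To repair your write-up you would need to supply this pigeonhole-plus-degree argument (or an equivalent construction of the core set); as it stands the central claim is assumed rather than proved.
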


	\section{Proof of Theorem \ref{theorem1r}}\label{section 2.1}
	
For a subgraph $H$ of $G$, we define $G-H$ as the graph derived from $G$ by removing all vertices of $H$ along with all edges incident to these vertices.
Furthermore, let $\mathcal{N}(G,H)$ represent the number of (not necessarily induced) copies of $H$ in $G$.
\medskip

\noindent {\bf Proof of Theorem \ref{theorem1r}.}
We prove by 
double induction on $k$ and $s$. 
Obviously, under our constraints on the binomial coefficient $\binom{k'}{s'}$, Theorem \ref{theorem1r} holds for $k=1$ or $s=1$.
Let $k\geqslant2$ and $s\geqslant2$. If the theorem is false, there is an $n$-vertex $k H$-free graph $G$ with $\mathcal{N}(G,K_s)$ $s$-cliques, where 
\begin{equation}\label{new 1}
\mathcal{N}(G,K_s)\geqslant \sum_{i=0}^{s}\mbox{ex}(n-(k-1)m,K_i,  H)\binom{(k-1)m}{s-i}+1.
\end{equation}
Subject to this, let $G$ have the maximum number of edges. Hence, $G$ contains $H$ as a subgraph.
Since $G-H$ is an $(n-m)$-vertex $(k-1)  H$-free graph,
we obtain 
\begin{align}
&\mathcal{N}(G-H,K_s)-\operatorname{ex}(n-m,K_s, (k-1)  H)\nonumber\\
\geqslant &\mathcal{N}(G,K_s) -\sum_{i=0}^{s-1}\mathcal{N}(H,K_{s-i})\mathcal{N}(G-H,K_i) -\operatorname{ex}(n-m,K_s,  (k-1)  H)\nonumber\\
\geqslant&  \mathcal{N}(G,K_s)-\sum_{i=0}^{s}\binom{m}{s-i}\operatorname{ex}(n-m,K_i, (k-1)  H)\nonumber\\
    \geqslant  & \mathcal{N}(G,K_s)	-\sum_{i=0}^{s}\binom{m}{s-i}\sum_{j=0}^{i}    \mbox{ex}(n-(k-1)m,K_j,  H)\binom{(k-2)m}{i-j}    \label{02}\\
=&\mathcal{N}(G,K_s)-\sum_{j=0}^{s} \mbox{ex}(n-(k-1)m,K_j,  H)\sum_{i=j}^{s} \binom{m}{s-i}\binom{(k-2)m}{i-j} \nonumber\\
=&\mathcal{N}(G,K_s)-\sum_{j=0}^{s} \mbox{ex}(n-(k-1)m,K_j,  H)\binom{(k-1)m}{s-j}>0, \nonumber
\end{align}
where (\ref{02}) follows from the induction hypothesis.
However, this is impossible by the definition of $\operatorname{ex}(n-m,K_s, (k-1)  H)$.
So the proof is complete.
\hfill$\square$ \medskip

\section{Proof of Theorems \ref{theorem3.4}}\label{section 2.1s}

In this part of the paper, we ascertain the upper bound of number of $s$-cliques subgraphs within an  $n$-vertex graph that excludes  $2   P_3$ and  $3   P_3$ as a subgraph, respectively.
 
	A {\it fan}, denoted by $F_c$, is the graph consisting of $c$ triangles with a common vertex, this is the graph obtained by joining a vertex to all $2c$ vertices of a matching size $c$.
	Call the common vertex in $F_c$ a {\it center} of $F_c$.
	Let $G^-$ be an arbitrary new graph obtained from $G$ by deleting an edge.
	For $S \subseteq V(G)$, the induced subgraph of $G$ by $S$ is denoted by $G[S]$. 
The complement graph of $K_s$ is denoted by $\overline{K_s}$.
With a slight abuse of notation, we say that a graph $H_1$ is incident with another graph $H_2$ if there exists an edge that connects a vertex in $H_1$ to a vertex in $H_2$. 

\medskip

\noindent {\bf Proof of Theorem \ref{theorem3.4} for $k=2$.}
Let $G$ be a $2  P_3$-free $n$-vertex graph with the maximum number of $s$-cliques and let $\widetilde{G}$ be an $n^\prime$-vertex graph derived from $G$ by removing all vertices that do not appear in any copy of $K_s$, where $n\geqslant6$
and $3\leqslant s \leqslant 5$.
It is easy to see that $\mathcal{N}(G,K_s)=\mathcal{N}(\widetilde{G},K_s)	$.
If $n< 3k =6$, then $G=K_n$ and therefore $n'=n$. Hence, we consider the case for $n'\geqslant 5$.

For $s=3$,
since $K_5$ and $F_{\lfloor(n-1)/2\rfloor}$ are $2  P_3$-free graphs,
we obtain
$$\mathcal{N}(G,K_3)	\geqslant\left\{\begin{array}{lll}
	10,
	& \text { for } & 6\leqslant n\leqslant 22;  \\
	\lfloor(n-1)/2\rfloor, & \text { for } & n>22.
\end{array}\right.
$$
Since $G$ does not contain $2P_3$ as a subgraph, it follows that $\widetilde{G}$ contains no two vertex-disjoint copies of $K_3$. Moreover, $\widetilde{G}$ is connected. Given that $\mathcal{N}(\widetilde{G},K_3) \geqslant 10$, it implies that any two copies of $K_3$ within $\widetilde{G}$ have at least one vertex in common.
If any two copies of $K_3$ share two vertices,
	 then $\widetilde{G}$ contains $K_4^{-}$ as a subgraph. 
	 If $n^\prime \geqslant 6$, then
	 $\widetilde{G}$ contains a copy of $K_2+\overline{K_4}$, which result in a   $2P_3$ in $G$, a contradiction. Therefore $n^\prime =5$, which implies that $\widetilde{G}=K_5$.
	If there are two copies of $K_3$ share only one vertex and then
	$F_2$ is a subgraph of $G$. 
	Since $G$ is $2   P_3$-free, 
	each vertex of $G-V(F_2)$ can only be incident with the center of $F_2$.
	According to $\mathcal{N}(\widetilde{G},K_3)$ reaching the maximum value, then $\widetilde{G}=K_5$ or $F_{\lfloor(n-1)/2\rfloor}$. 
	Hence,
		$$
	\operatorname{ex}\left(n,K_3, 2  P_3\right)=\left\{\begin{array}{lll}
		10,
		& \text { for } & 6\leqslant n\leqslant 22;  \\
		\lfloor(n-1)/2\rfloor, & \text { for } & n>22.
	\end{array}\right.
	$$
	Since $G$ contains no $2  P_3$, 
 $G$ satisfies that $K_{5}\cup I_{n-5}\subseteq G \subseteq K_{5} \cup M_{n-5}$ for $n\leqslant 22$ or it is a subgraph of $K_{1}+M_{n-1}$
 for $n\geqslant 22$, respectively.
	Hence, Theorem \ref{theorem3.4} holds for $k=2$ and $s=3$.
	
	For $s=4$, since $K_5$ is $2  P_3$-free,
	we get $\mathcal{N}(\widetilde{G},K_4)\geqslant 5$.
	There are two copies of $K_4$ sharing three common vertices, otherwise $G$ contains a copy of $2  P_3$, a contradiction.
    Moreover,  any vertex outside these two copies of $K_4$ is not contained in a copy of $K_4$.
	Thus $\widetilde{G}=K_5$, whence $G$ is the disjoint union of $K_5$ and a subgraph of $M_{n-5}$.

For $s=5$, clearly  $\widetilde{G}=K_5$.
Hence, $G$ is the disjoint union of $K_5$ and the subgraph of $M_{n-5}$.
The proof is complete.
\hfill$\square$ \medskip

For the case where $k=3$, we depend on an essential theorem established by Luo \cite{2018Luo219}.
\begin{theorem}[\cite{2018Luo219}\label{theorem0}]  
	Let $n \geqslant k \geqslant 4$ and let $G$ be an $n$-vertex connected graph with no path on $k$ vertices. Let $t=\lfloor(k-2) / 2\rfloor$. Then $\mathcal{N}(G,K_s) \leqslant \max \{f_s(n, k-1,1), f_s(n, k-1, t)\}$, where $f_s(n, k, a)=\binom{k-a}{s}+(n-k+a)\binom{a}{s-1}$.
\end{theorem}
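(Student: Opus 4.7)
\medskip

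\noindent {\bf Proof plan for Theorem \ref{theorem0}.}
The plan is to proceed by induction on $n$, splitting into two cases according to whether $G$ is $2$-connected. In the non-$2$-connected case, a block decomposition reduces the bound to the $2$-connected case combined with the inductive hypothesis; in the $2$-connected case, one establishes a Kopylov-type bound for cliques, exploiting the fact that a $2$-connected $P_k$-free graph has circumference at most $k-1$ (since a cycle of length $k$ would contain $P_k$ as a spanning subpath).

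I would begin with the $2$-connected case. Fix a longest cycle $C$ in $G$ of length $c\leqslant k-1$. Every vertex $u\notin V(C)$ has its neighborhood on $C$ restricted by a P\'osa-type rotation argument: if the gap between two consecutive $C$-neighbors of $u$ exceeded $k-1-c$, one could either extend $C$ to a longer cycle or find a $P_k$. Partition the $K_s$-copies of $G$ by how they meet $V(C)$, and sum over the outside vertices $u$. The structure that emerges identifies a ``core'' $A\subseteq V(C)$ of size some $a\in\{2,\ldots,\lfloor c/2\rfloor\}$ such that every outside vertex has its $C$-neighborhood contained in $A$, and every $K_s$-copy either lies inside $V(C)$ or uses a vertex of $A$ together with one outside vertex. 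Counting $s$-cliques of the first type by $\binom{k-1-a}{s}$ and of the second by $(n-k+1+a)\binom{a}{s-1}$, and optimizing over $a$, yields the bound $\max\{f_s(n,k-1,a):2\leqslant a\leqslant\lfloor c/2\rfloor\}$. A routine comparison of binomial expressions then shows that this maximum is at most $\max\{f_s(n,k-1,1),f_s(n,k-1,t)\}$.

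Next I would handle the general case by induction on the number of blocks. If $G$ has a cut vertex, choose an end-block $B$ with cut vertex $v$ and set $W=V(B)\setminus\{v\}$. Because every $K_s$ with $s\geqslant 2$ lies inside a single block, $\mathcal{N}(G,K_s)=\mathcal{N}(B,K_s)+\mathcal{N}(G-W,K_s)$. Apply the $2$-connected case to $B$ and the induction hypothesis to $G-W$, which is connected and $P_k$-free on fewer vertices. One then verifies by a case analysis on which values of $a\in\{1,2,t\}$ attain the maxima for the two pieces that the resulting sum does not exceed $\max\{f_s(n,k-1,1),f_s(n,k-1,t)\}$, using the linearity of $f_s(\cdot,k-1,a)$ in its first argument (with slope $\binom{a}{s-1}$), together with the fact that a ``book'' extremal structure glued at its hub vertex to another extremal piece does not improve on the monolithic extremal graph.

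The main obstacle is the Kopylov-style counting in the $2$-connected step: the rotation-extension argument only bounds how outside vertices attach to $V(C)$, and turning this into a sharp bound on the number of $K_s$-copies requires careful control of cliques that mix several cycle vertices with outside vertices. I would address this by an exchange argument that replaces $G$, without decreasing $\mathcal{N}(G,K_s)$, by one of the extremal graphs $K_a+(K_{k-1-2a}\cup\overline{K_{n-k+1+a}})$ with $a\in\{1,t\}$; this is the $K_s$-analogue of the classical Kopylov edge argument and is the technical heart of the proof.
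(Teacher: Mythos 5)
First, a point of calibration: the paper does not prove Theorem~\ref{theorem0} at all --- it is quoted from Luo \cite{2018Luo219} and used as a black box --- so your proposal can only be measured against Luo's published proof. At the level of architecture your plan is not unreasonable: reducing the connected case to the $2$-connected case through an end-block $B$ with cut vertex $v$, using that every $K_s$ with $s\geqslant 2$ lies in a single block so that $\mathcal{N}(G,K_s)=\mathcal{N}(B,K_s)+\mathcal{N}(G-W,K_s)$, is a correct and standard observation.

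The central gap is the $2$-connected case, which is the entire content of the theorem, and your treatment of it is circular. The structure you claim to extract from a P\'osa rotation --- a set $A\subseteq V(C)$ of size $a\leqslant\lfloor c/2\rfloor$ such that every outside vertex attaches only inside $A$ and every $s$-clique either lies in $V(C)$ or uses exactly one outside vertex together with vertices of $A$ --- is not a consequence of rotation; it is essentially a description of the extremal graphs $K_a+(K_{k-1-2a}\cup\overline{K_{n-k+1+a}})$, and as stated it does not even account for cliques lying entirely outside $V(C)$. When you then say the technical heart is ``an exchange argument that replaces $G$ by one of the extremal graphs without decreasing $\mathcal{N}(G,K_s)$,'' you are asserting the theorem, not proving it. Luo's actual proof (following Kopylov) needs no such global structure: it uses disintegration. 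One repeatedly deletes vertices of degree at most $a$, noting that each deletion destroys at most $\binom{a}{s-1}$ copies of $K_s$ (all cliques through a vertex lie in its closed neighborhood) --- this produces the linear term $(n-k+1+a)\binom{a}{s-1}$ --- and then a lemma of Kopylov, resting on Dirac/Erd\H{o}s--Gallai-type minimum-degree conditions for long cycles and paths inside the $2$-connected ambient graph, confines the surviving core to at most $k-1-a$ vertices, giving the term $\binom{k-1-a}{s}$. That counting-by-deletion idea is precisely what your plan lacks.

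Moreover, two steps you call routine are false as written. (i) You only establish $c\leqslant k-1$, and then claim $\max\{f_s(n,k-1,a):2\leqslant a\leqslant\lfloor c/2\rfloor\}\leqslant\max\{f_s(n,k-1,1),f_s(n,k-1,t)\}$. For odd $k$ this fails: $\lfloor (k-1)/2\rfloor=t+1$, and the coefficient of $n$ in $f_s(n,k-1,a)$ is $\binom{a}{s-1}$, increasing in $a$. Concretely, for $k=7$, $s=3$ one has $f_3(n,6,3)=3n-8$, while $\max\{f_3(n,6,1),f_3(n,6,2)\}=\max\{10,n\}$. You must sharpen $c\leqslant k-1$ to $c\leqslant k-2$ (a cycle on $k-1$ vertices plus an attached vertex, which exists since $n\geqslant k$ and $G$ is connected, yields a $P_k$), and more fundamentally your ``core'' must satisfy $a\leqslant t$, which requires using $P_k$-freeness itself and not merely the circumference; this parity issue is exactly why the path theorem is not a formal corollary of the cycle theorem. (ii) The block induction cannot close by adding worst-case bounds for the two pieces: with $|B|=n_1$, $|G-W|=n_2$ and $n_1+n_2=n+1$, for $k=7$, $s=3$ and $n_1,n_2\geqslant 10$ the sum of the inductive bounds is $n_1+n_2=n+1$, exceeding the target $n$. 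The point is that $P_k$-freeness of $G$ is a joint constraint on the two sides of the cut vertex (both pieces cannot simultaneously be extremal, since gluing two extremal pieces at any vertex creates a $P_k$), so the induction hypothesis must be strengthened --- for instance by tracking the longest path ending at the cut vertex --- rather than applied independently to each piece.
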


In the following proof, we define two edges as independent if no connecting edges exist between them.

 \medskip
 
\noindent {\bf Proof of Theorem \ref{theorem3.4} for $k=3$.} Since Theorem \ref{theorem3.22} give the case on $s\geqslant 5$, we may assume that $s\in\{3,4\}$ in the subsequent discussion.
Let $G$ be an $n$-vertex  $3  P_3$-free graph with 
maximum number of
 $s$-cliques.
Delete the vertices and edges of $G$ which are not in a copy of $K_s$ and denote the obtained graph by $G^\prime$ with $n^\prime\geqslant 9$ vertices (if $n^\prime \leqslant 8$, then $G'=K_8$ and $G$   satisfies that $K_{8}\cup I_{n-8}\subseteq G \subseteq K_{8} \cup M_{n-8}$.).
Since $K_8$ and $K_2+M_{n-2}$ are both $3P_3$-free,
we get $$\mathcal{N}(G,K_s)= \mathcal{N}(G^\prime,K_s)\geqslant \max\left\{{8 \choose s},f(n,3,s)\right\}.$$Assume that $G^\prime$ is disconnected.
Clearly, each component of $G^\prime$ contains a copy of $P_3$.
Thus $G^\prime$ consists of two components $C_1$ with $n_1$ vertices and $C_2$ with $n_2$ vertices, where $n_1+n_2\leqslant n$.
Both of $C_1$ and $C_2$ are $2  P_3$-free.
By Theorem~\ref{theorem3.4} for $k=2$, 
$$\mathcal{N}(G^\prime,K_s)\leqslant \max\left\{{\min\{n_1,5\} \choose s},f(n_1,3,s)\right\}+\max\left\{{\min\{n_2,5\} \choose s},f(n_2,3,s)\right\}.$$
By the maximality of $G^\prime$ and Theorem~\ref{theorem3.4} for $k=2$, we can assume that $C_i$ is either $K_5$ or a fan for $i=1,2$.
Basic calculations show that 
$\mathcal{N}(G,K_s)< \max\{{8 \choose s},f(n,3,s)\}$,
 a contradiction.
Thus $G^\prime$ is connected.

Let $P=P_\ell$ be a longest path in $G^\prime$.
Since $G$ contains no $3  P_3$, we have $\ell\leqslant8$.
If $\ell \leqslant 6$, then $G$ contains no $P_7$.
For $s=3,4$,
by Theorem \ref{theorem0} and basic calculations, we have
$\mathcal{N}(G,K_s)\leqslant \max \{f_s(n, 6,1), f_s(n, 6, 2)\} <\max \{ \binom{8}{s}, f(n,3,s) \}$, a contradiction.
Thus $\ell=7,8$.

\medskip


\noindent {\bf Claim.} $\ell=8$.

\medskip

\begin{proof}
If $\ell=7$, then $P_7=x_1x_2\ldots x_7$ be a longest path in $G^\prime$.
Since $G$ is $3P_3$-free, $G^\prime-P_7$ consists of isolated vertices and independent edges.
Suppose that there is a vertex $u$ in $G^\prime-P_7$ which is incident with at least three vertices of $P_7$.
Then it is only incident with $x_2,x_4,x_6$,
since $P_7$ is the longest path in $G^\prime$.
Since $n^\prime\geqslant 9$, there is a vertex $v$ which is incident with $u$ or $P_7$.
For the former, $G'$ contains a path $vux_2 x_3 x_4 x_5 x_6 x_7$ of length 8, a contradicts.
For the latter, 
$G^\prime$ contains a copy of $3  P_3$, a contradiction. 
Thus each vertex of  $G^\prime-P_7$ is incident with at most two vertices of $P_7$. 
According to the definition of $G'$, every vertex and every edge in $G'$ are contained in an $s$-clique of $G$.

If $s = 4$, then, given that $n^{\prime} \geqslant 9$, there necessarily exists an edge $uv$ that lies outside the path $P$, along with two vertices $x_i$ and $x_j$ of $P$, such that the induced subgraph $G^{\prime}[x_i, x_j, u, v]$ forms a $K_4$. However, for every pair of indices $i, j \in [7]$, the graph $G^{\prime}$ contains either a $P_8$ or a $3P_3$, leading to a contradiction.

We next focus on the case where $s=3$. If there are at least two isolated vertices in $G^\prime-P_7$.
Choose isolated vertices $w_1$, $w_2$ of $G^\prime-P_7$. Note that $w_i$ has exactly two neighbors in $P$ to form a triangle.
Hence, by symmetry, we only consider the following four cases of $N_P(w_1)$, where $N_P(w_1)$ is the set of neighbors of $w_1$ in $P$.

\medskip

{\bf Case 1.} $N_P(w_1)=\{x_2,x_4\}$.
In this case, regardless of which two vertices in $P_7$ are adjacent to $w_2$, $G'[\{w_1, w_2\} \cup V(P_7)]$ contains 3 disjoint copies of $P_3$, which is a contradiction.

\medskip

{\bf Case 2.} $N_P(w_1)=\{x_2,x_5\}$.
If $w_2$ is adjacent to $x_2$ and $x_4$, then we conclude by Case 1. Since $G$ contains no $3 P_3$, $w_2$ can only be adjacent to $x_2,x_5$ and $G'-\{x_2,x_5\}$ contains no $P_3$.
Thus  $G^\prime-\{x_2,x_5\}$ consists of isolated vertices and independent edges. Consequently, $G'\subset K_2+M_{n-2}$ (where $K_2=x_2x_5$), implying that $\mathcal{N}(G',K_3)<f(n,3,3)$, a contradiction.

\medskip

{\bf Case 3.} $N_P(w_1)=\{x_2,x_6\}$.
Then any other isolated vertex of $G^\prime-P_7$ cannot be incident with $P_7$ as $G^\prime$ is $3  P_3$-free.
Thus we get a contradiction.

\medskip

{\bf Case 4.} $N_P(w_1)=\{x_3,x_5\}$.
Since $G^\prime$ is $3  P_3$-free, any other isolated vertex of $G^\prime-P_7$ can only be adjacent to $x_3$ and $x_5$ and $G'-\{x_3,x_5\}$ contains no $P_3$, implying $G^\prime-\{x_3,x_5\}$ consists of isolated vertices and independent edges
as Case 2, a contradiction.

Now, we assume that there exists at most one isolated vertex, implying the presence of an edge $uv$ within $G' - P_7$ as $n' \geqslant 9$.  
Since $G^\prime$ is $3  P_3$-free and $P_7$ is the longest path, $uv$ can only be incident with $x_3$ or $x_5$.  
Since $uv\in E(G^\prime)$, $uv$ is contained in some triangle. Without loss of generality, let $uvx_3u$ form such a triangle. We first deduce that $x_1x_2$ is only incident with $x_3$ in $G'$.
If both $x_3$ and $x_5$ are incident with $uv$, then it implies the existence of a path of length 8, which contradicts our initial assumption.
Therefore, $uv$ is only incident with $x_3$ in $P_7$. On the one side, given that $P_7$ is the longest path in $G^\prime$, it follows that $x_1x_2$ can only be incident with $x_3$ within the set $\{x_3,\ldots,x_7\}$. On the other side, $x_1x_2$ is an edge of some triangle by definition of $G'$, implying that $x_1x_2x_3x_1$ forms one such triangle.   
Let $w\in G-P_7$, then $w$ can't be the neighbor of $x_1$ or $x_2$. Otherwise, a $3P_3$ will occur.

If there is another edge incident with $x_5$, then following the same idea, we deduce that $x_6x_7$ is only incident with $x_5$ in $G'$.
Under this condition, we show that $x_4$ has no neighbor except $\{x_3,x_5\}$ in $G'$. Indeed, $x_4$ has no neighbor in $P_7-\{x_3,x_5\}$ from the above discussion. If $x_4$ is incident with an edge outside $P_7$, then we can easily obtain a $3P_3$. If $x_4$ is incident with a vertex $w$, we can obtain a longer path as $wx_4x_iw$ (where $i=3$ or $i=5$) forms a triangle. Both cases lead to contradictions. Consequently, we can infer that $G'-\{x_3,x_5\}$ consists of $M_{n-2}$. Then $G'\subset K_2+M_{n-2}$, implying $\mathcal{N}(G',K_3)<f(n,3,3)$, a contradiction. 

Then all edges outside $P_7$ are only incident with $x_3$. Since there is at most one isolated vertex in $G'-P_7$, 
then this possible unique vertex (denoted by $w$) cannot be incident with $x_1x_2$ as $G'$ is $3P_3$-free, implying that $G^\prime$ is the subgraph of $(K_5\cup M_{\lfloor (n-6)/2\rfloor})+K_1$, where $K_1= x_3$, $K_5$ consists of vertices $w, x_4, x_5, x_6, x_7$ and $M_{\lfloor (n-6)/2\rfloor}$ consists of the remaining vertices. Thus $\mathcal{N}(G^\prime,K_3)\leqslant{6 \choose 3} + \lfloor (n-6)/2\rfloor< \max\{{8 \choose 3},f(n,3,3)\}$, a contradiction.
\end{proof}

Let $P=x_1x_2\ldots x_8$ be a longest path in $G^\prime$.
Then  $G^\prime-P$ comprises isolated vertices and independent edges. Since $G^\prime$ is $3P_3$-free, every vertex $v$ in $G^\prime - P$ can only have its neighbors in $\{x_3, x_6\}$. Furthermore, by definition of $G'$, $x_3$ and $x_6$ must be neighbors of $v$ to form an $s$-clique. 
Now, we endeavor to prove that, $G' = K_2 + M_{n-2}$ when $ n' \geqslant 9$. 

If $G' - \{x_3, x_6\}$ consists of isolated vertices and independent edges, then $G'$ is a subgraph of $K_2 + M_{n-2}$ (where $K_2=x_3x_6$). This implies that $\mathcal{N}(G^\prime,K_s) \leqslant f(n^\prime,3,s)\leqslant f(n,3,s)$ for $s=3,4$. Furthermore, the equality $\mathcal{N}(G^\prime,K_s)=f(n,3,s)$ holds
if and only if $G=G^\prime=K_2+M_{n-2}$.
If $G'-\{x_3,x_6\}$ contains a $P_3$, say $uvw$, as a subgraph, then we conclude that $uvw \subset G'[\{x_1, x_2, x_4, x_5, x_7, x_8\}]$ as no vertex outside $P$ has neighbor in $\{x_1, x_2, x_4, x_5, x_7, x_8\}$. Therefore, there is an edge that connects some pair of $\{x_1x_2, x_4x_5, x_7x_8\}$.  

If $uvw\subset G[\{x_1, x_2, x_7, x_8\}]$, then 
there is exactly one isolated vertex in $G'-P$. Indeed, if $u_1$ and $u_2$ are vertices in $G'-P$, then they must either be adjacent to the same vertex in $\{x_3,x_6\}$ or to different vertices in $\{x_3,x_6\}$. In either case, $G' -\{u, v, w\}$ would contain a $ 2P_3$, leading to a contradiction. 
Therefore, $n' = 9 $, and we denote the final vertex as $x_9$. Then the only edge that connects $x_1x_2$ and $x_7x_8$ is $x_2x_7$, otherwise, there will be a $P_9$ including $x_9$.
Since $G[V(P)]\subseteq K_8-\{x_1x_8,x_1x_7, x_2x_8\}$, we obtain that $\mathcal{N}(G^\prime,K_s)< {8 \choose s}-1+{3 \choose s}\leqslant \max\{{8 \choose s},f(n,3,s)\}$ for $s=3,4$. Here ${8 \choose s}-1$ represents the maximum number of $s$-cliques in $G[V(P)]$ and ${3 \choose s}$ signifies the number of $s$-cliques in $G'[x_3,x_6,x_9]$. This contradicts our initial assumption. Consequently, there is no edge connecting $x_1x_2$ and $x_7x_8$ and $uvw\subset G[\{x_1, x_2, x_4, x_5\}]$ or $uvw\subset G[\{x_4, x_5, x_7, x_8\}]$. By symmetry, we can assume that $uvw\subset G[\{x_1, x_2, x_4, x_5\}]$. Since $G$ is $3P_3$-free, there is at most one isolated vertex (denoted as $v'$) in $G' - P$ that is adjacent to $x_3$. Furthermore, since each vertex in $G' - P$ is only adjacent to $x_3$ or $x_6$ in $P$, the remaining $n' - 9$ vertices in $G' - P$ must be solely adjacent to $x_6$. 
Therefore, $\mathcal{N}(G^\prime,K_s)<{6 \choose s}+{6 \choose s}+\lfloor(n'-8)/2\rfloor{3 \choose s}+{3 \choose s}< \max\{{8 \choose s},f(n,3,s)\}$for $s=3,4$. Here, ${6 \choose s}$ represents the number of $s$-cliques in $G[\{x_1, x_2, x_3, x_4, x_5, x_6\}]$, 
another ${6 \choose s}$ corresponds to the number of $s$-cliques in $G[\{x_3, x_4, x_5, x_6, x_7, x_8\}]$,
$\lfloor(n'-8)/2\rfloor{3 \choose s}$ represents the maximum number of $s$-cliques in $G' -(V(P)\setminus\{x_6\})$ and the remaining ${3 \choose s}$ represents the number of $s$-cliques formed by the vertices in $\{x_3,x_6,v'\}$, a contradiction.

Above all,  for $n' \geqslant 9$, we have $G' = K_2 + M_{n-2}$, which implies that $\mathcal{N}(G', K_s) = \max\left\{{8 \choose s}, f(n, 3, s)\right\}$ for $s = 3, 4$. This completes the proof.
\hfill$\square$ \medskip

\section{Proof of Theorems \ref{theorem3.22}}\label{section 3}
 Let $G$ be a $k   P_3$-free graph on $n$ vertices with the maximum number of $s$-cliques and subject to which, let $G$ contain as many edges as possible. Therefore, $G$ contains a $(k-1)P_3$ as a subgraph.
	Let $H = (k-1)  P_3$ be the $k-1$ disjoint paths $x_1y_1z_1, \ldots, x_{k-1} y_{k-1} z_{k-1}$ of $G$ 
	such that $G' = G-V(H)$ has the maximum number of edges.
	Before proving the main result, we present a technical lemma.
	
\begin{lemma}\label{lemma2.1} 
	Let $u_1 v_1,  u_2 v_2$ be edges and  $w_1, w_2$ be isolated vertices in $G'$.
\begin{itemize}
	\item (a) If   $w_1, w_2$ are adjacent to at least one vertex in $\left\{x_1, y_1, z_1\right\}$, then they are only adjacent to $y_1$,
	\item (b) If $u_1$ is adjacent to at least two vertices in $\left\{x_1, y_1, z_1\right\}$, 
	then $u_2,v_2$ are not adjacent to $x_1, y_1, z_1$ and at most one of $w_1,w_2$ is adjacent to  $x_1, y_1, z_1$. 
\end{itemize}
\end{lemma}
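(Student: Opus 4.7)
The plan is to argue both parts by contradiction, each time either exhibiting a $kP_3$ subgraph of $G$ (violating $kP_3$-freeness) or constructing a new $(k-1)P_3$ subgraph $H^\ast\subseteq G$ with $|E(G-V(H^\ast))|>|E(G')|$ (violating the maximality of $|E(G')|$ among all choices of $H$).

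For part (a), by the $x_1\leftrightarrow z_1$ symmetry of the path $x_1y_1z_1$ and relabeling of $w_1,w_2$, I may assume $w_1\sim x_1$ and split into three cases according to the neighbor of $w_2$ in $\{x_1,y_1,z_1\}$; in each case $H^\ast$ is obtained from $H$ by replacing only the path $x_1y_1z_1$:
\begin{itemize}
\item if $w_2\sim x_1$, replace by $w_1x_1w_2$; then $G-V(H^\ast)$ contains $y_1,z_1$ and so gains the edge $y_1z_1$ beyond $E(G')$;
\item if $w_2\sim y_1$, replace by $w_2y_1z_1$; then $G-V(H^\ast)$ contains $x_1,w_1$ and gains the edge $x_1w_1$;
\item if $w_2\sim z_1$, replace by $w_1x_1y_1$; then $G-V(H^\ast)$ contains $z_1,w_2$ and gains the edge $z_1w_2$.
\end{itemize}
Since $w_1,w_2$ are isolated in $G'$, removing them from $G'$ loses no edges; hence in every case $|E(G-V(H^\ast))|\geqslant|E(G')|+1$, contradicting the choice of $H$.

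For part (b), I take WLOG $u_1\sim x_1,y_1$ (the pairings $\{x_1,z_1\}$ and $\{y_1,z_1\}$ are handled by the same reasoning after relabeling). For the claim $u_2,v_2\not\sim\{x_1,y_1,z_1\}$, assume by symmetry of $u_2,v_2$ that $u_2\sim\alpha$ with $\alpha\in\{x_1,y_1,z_1\}$; pick $\beta\in\{x_1,y_1\}\setminus\{\alpha\}$, which is nonempty. Then $v_2u_2\alpha$, $v_1u_1\beta$, and the paths $x_iy_iz_i$ for $i=2,\dots,k-1$ are pairwise vertex-disjoint, giving $kP_3\subseteq G$, a contradiction. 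For the claim that at most one of $w_1,w_2$ has a neighbor in $\{x_1,y_1,z_1\}$, suppose $w_j\sim\alpha_j\in\{x_1,y_1,z_1\}$ for both $j=1,2$. If $\alpha_1=\alpha_2=\alpha$, use $w_1\alpha w_2$ and $v_1u_1\beta$ with $\beta\in\{x_1,y_1\}\setminus\{\alpha\}$; if $\alpha_1\neq\alpha_2$, a short case analysis on the three unordered pairs suffices: for $\{\alpha_1,\alpha_2\}=\{x_1,y_1\}$ take $v_1u_1x_1$ and $w_jy_1z_1$ (where $w_j\sim y_1$); for $\{x_1,z_1\}$ take $w_ix_1u_1$ and $w_jz_1y_1$ (labels matching the adjacencies); for $\{y_1,z_1\}$ take $v_1u_1x_1$ and $w_jz_1y_1$ (where $w_j\sim z_1$). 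Together with $x_iy_iz_i$ for $i\geqslant 2$, each instance produces $kP_3\subseteq G$, a contradiction.

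The main technical obstacle is the case analysis in part (b)(ii) when $\alpha_1\neq\alpha_2$, where one must verify for every pair that the chosen two $P_3$'s are vertex-disjoint and rely only on edges guaranteed by the hypotheses (the edges of the path $x_1y_1z_1$, the edge $u_1v_1$, the two adjacencies $u_1\sim x_1$ and $u_1\sim y_1$, and the assumed $w_j\sim\alpha_j$). The fact that $u_1$ has two neighbors in $\{x_1,y_1,z_1\}$ is exactly the routing flexibility needed to cover every configuration.
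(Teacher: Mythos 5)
Your proof is correct and follows essentially the same route as the paper's: part (a) by exhibiting a $P_3$ together with a disjoint extra edge inside $G[\{w_1,w_2,x_1,y_1,z_1\}]$ and swapping that $P_3$ for $x_1y_1z_1$ to contradict the edge-maximality of $G'$, and part (b) by producing two disjoint copies of $P_3$ that combine with $Q_2,\dots,Q_{k-1}$ into a $kP_3$. The only cosmetic difference is in the final claim of (b), where the paper invokes part (a) to force both $w_1,w_2$ to be adjacent only to $y_1$ and then uses $w_1y_1w_2$ together with $v_1u_1\gamma$ for $\gamma\in\{x_1,z_1\}\cap N(u_1)$, whereas you enumerate the pairs $\{\alpha_1,\alpha_2\}$ directly.
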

	\noindent {\bf Proof.}
	(a) If $w_{1}$ is adjacent to $x_1$ (or $z_1$), then $w_{2}$ can not be adjacent to vertices in $\left\{x_1, y_1, z_1\right\}$,
	as otherwise $G[\{w_{1}, w_{2}, x_1, y_1, z_1\}]$ contains disjoint union of $P_3$ and an edge, contradicting our choice of $G'$.
	Thus  $w_{1}$ and $w_{2}$ can be only adjacent to $ y_1$.

	(b) If $u_1$ is adjacent to at least two vertices in $\left\{x_1, y_1, z_1\right\}$ and there are some edges between $\{u_2, v_2\}$ and $\{ x_1, y_1, z_1\}$, then $u_2$, $v_2$ with one vertex in $ \{ x_1, y_1, z_1\}$ (which adjacent to $u_2$ or $v_2$) form one copy of $P_3$ and $v_1$,$u_1$ with another vertex in $ \{ x_1, y_1, z_1\}$ (which adjacent to $u_1$) form the second copy of $P_3$,
	 contradicting $G$ be a  $k   P_3$-free graph.
	 Hence, $u_2,v_2$ are not adjacent to $x_1, y_1, z_1$.
	If $w_1,w_2$ are adjacent to $x_1, y_1, z_1$, 
	then by (a), we get they are only adjacent to $y_1$.
	Therefore, $G[\{w_1, w_2, u_1, v_1, x_1, y_1, z_1\}]$ contains  $2   P_3$, 
	 contradicting $G$ contains no  $k   P_3$.
	\hfill$\square$ 

\medskip

\noindent {\bf Proof of Theorem \ref{theorem3.22}.}
Let $k\geqslant2$, $n\geqslant 3k$ and $s \geqslant k+2$.
We will prove the theorem by induction on $k$.
The case $k=2$ is covered by Theorem \ref{theorem3.4} and the case $s\geqslant 3k$ holds trivially.
Let $k\geqslant 3$ and $s\leqslant 3k-1$.
Obviously, $\mathcal{N}(G,K_s)\geqslant \binom{3k-1}{s}$ as $K_{3k-1}\cup M_{n-3k+1}$ is $kP_3$-free.
Let $X\subseteq V(G)\setminus V(H)$ be the vertices in a copy of $K_s$.

\medskip

\noindent {\bf Claim.} $G[X]=K_2$.

\medskip

\begin{proof}
Clearly, $G[X]$ consists of isolated vertices and independent edges.
If $|X|\leqslant 1$, then the number of vertices that appear in a copy of $K_s$ is at most $|V(H)\cup X|\leqslant 3k-2$, implying 
$\mathcal{N}(G,K_s)< {3k-1 \choose s}$. Therefore, $|X|\geqslant 2$.
Suppose that $G[X]$ does not contain an edge and let $u\in X$.
Since $s-1\geqslant k+1$, there is a copy of $K_s$ containing $u$ and two vertices of $x_iy_iz_i$ for some $i\in [k-1]$. Without loss of generality, let $i=1$.
Then by Lemma~\ref{lemma2.1}(a), each isolated vertex of $G'-\{u\}$ is not incident with $x_1y_1z_1$. Therefore, any $s$-clique that involves at least one vertices in $\{x_1,y_1,z_1\}$ is contained in $G[V(H)\cup \{u\}]$. It follows that there are at most $3k - 5$ vertices outside $\{x_1,y_1,z_1\}$ that appear in such $s$-clique.
Since $G-x_1y_1z_1$ is $(k-1)P_3$-free, by inductive hypothesis, $\mathcal{N}(G-x_1y_1z_1,K_s)\leqslant \binom{3k-4}{s}$. Moreover, we further obtain the inequality
\begin{align*}
\mathcal{N}(G,K_s)\leqslant  &~ \mathcal{N}(G-x_1y_1z_1,K_s)  +  \sum_{p\in\{x_1, y_1, z_1\}}\mathcal{N}_p(G-x_1y_1z_1,K_{s-1}) \\
 &+ \sum_{pq\in\{x_1y_1, y_1z_1, x_1z_1\}}\mathcal{N}_{pq}(G-x_1y_1z_1,K_{s-2})   + \mathcal{N}^\ast(G-x_1y_1z_1,K_{s-3}) \\
              \leqslant&\binom{3k-4}{s}+3\binom{3k-5}{s-1}+3\binom{3k-5}{s-2}+\binom{3k-5}{s-3} <\binom{3k-1}{s},
\end{align*} 
here, $\mathcal{N}_p(G-x_1y_1z_1,K_{s-1})$ denotes the number of copies of $K_{s-1}$ in $G-x_1y_1z_1$ that include a vertex $p\in\{x_1, y_1, z_1\}$, $\mathcal{N}_{pq}(G-x_1y_1z_1,K_{s-1})$ represents the number of copies of $K_{s-2}$ in $G-x_1y_1z_1$ that encompass an edge $pq\in\{x_1y_1, y_1z_1, x_1z_1\}$, and
$\mathcal{N}^\ast(G-x_1y_1z_1,K_{s-3})$ signifies the number of copies of $K_{s-3}$ in $G-x_1y_1z_1$ that incorporate the triangle $x_1y_1z_1x_1$. However, it contradicts our initial assumption.

Thus we assume that there is an edge $uv$ in $G[X]$.
Since $s-2\geqslant k$, by symmetry, there is a copy of $K_s$ containing $u$ and two vertices of $x_1y_1z_1$. Without loss of generality, we may assume that $u$ is adjacent to $x_1$. If $|X|\geqslant 3$, then by Lemma~\ref{lemma2.1}(b), at most one isolated vertex of $G'-\{u,v\}$ is incident with $x_1y_1z_1$. If such an isolated vertex exists, we denote this vertex as $w$. Since $G$ is $kP_3$-free, $w$ can only be adjacent to $x_1$, $u$ is only adjacent to $x_1$ and $y_1$, and $v$ cannot be adjacent to any vertex of $x_1,y_1,z_1$.
Hence, any $s$-clique with at least one vertex from $\{x_1, y_1, z_1\}$ lies within $G[V(H) \cup \{u, w\}]$. If $x_1$ is in the clique, it has at most $3k - 4$ vertices outside $\{x_1, y_1, z_1\}$. If not, it has at most $3k - 5$ such vertices.
By induction, we have $\mathcal{N}(G-x_1y_1z_1,K_s)\leqslant \binom{3k-4}{s}$. We further obtain that
\begin{align*}
\mathcal{N}(G,K_s)&\leqslant   \mathcal{N}(G-x_1y_1z_1,K_s)  + \sum_{p\in\{x_1, y_1, z_1\}}\mathcal{N}_p(G-x_1y_1z_1,K_{s-1}) \\
 &+ \sum_{pq\in\{x_1y_1, y_1z_1, x_1z_1\}}\mathcal{N}_{pq}(G-x_1y_1z_1,K_{s-2})   + \mathcal{N}^\ast(G-x_1y_1z_1,K_{s-3}) \\
              &< \binom{3k-4}{s}+3\binom{3k-4}{s-1}+3\binom{3k-4}{s-2}+\binom{3k-4}{s-3} =\binom{3k-1}{s},
\end{align*}
a contradiction. If no such isolated vertex exists, then by Lemma~\ref{lemma2.1}(b), only $uv$ of $G[X]$ is incident with $x_1y_1z_1$.
By induction, we have $\mathcal{N}(G-x_1y_1z_1,K_s)\leqslant \binom{3k-4}{s}$. We further obtain that
\begin{align*}
\mathcal{N}(G,K_s)  
              \leqslant \binom{3k-4}{s}+3\binom{3k-4}{s-1}+3\binom{3k-4}{s-2}+\binom{3k-4}{s-3} =\binom{3k-1}{s},
\end{align*}
$\mathcal{N}(G,K_s)=\binom{3k-1}{s}$ if and only if $G-x_1y_1z_1 = K_{3k-4}$ and $G$ contains $K_{3k-1}$ as a subgraph. In this case, $G[X]=K_2$, otherwise $G$ contains $P_{3k}$ as a subgraph, a contradiction.
Therefore, $|X|=2$.
\end{proof}
By the above claim,   $\operatorname{ex}\left(n,K_s, k  P_3\right)= \binom{3k-1}{s}$ and each  extremal graph $F$ satisfies that  $K_{3k-1}\cup I_{n-3k+1}\subseteq F \subseteq K_{3k-1} \cup M_{n-3k+1}$.
The proof is complete.
\hfill$\square$ \medskip

\section{Proof of Theorem \ref{thm} and Theorem \ref{theorem3.1}}\label{section 2}

	Let $G$ be an $n$-vertex $k  P_3$-free graph with $(k-1)  P_3$ as a subgraph. 
Furthermore, let $\mathcal{Q}$ represent the set of $k-1$ disjoint $P_3$ subgraphs (specifically, $Q_1=x_1y_1z_1, \ldots, Q_{k-1}=x_{k-1}y_{k-1}z_{k-1}$) in $G$ that maximizes the number of indices $i$ for which $G[V(Q_i)]$ forms a triangle. Among all such sets $\mathcal{Q}$ that meet this criterion, let $G' = G - \bigcup_{i=1}^{k-1} V(Q_i)$ be the graph with the maximum number of edges. Note that $G'$ consists of isolated vertices and independent edges.
For simplicity, let $e(G)$ represent the number of edges in $G$.

We divide $\mathcal{Q}$ into two sets  $\mathcal{Q}_1$ and $\mathcal{Q}_2$,
where $\mathcal{Q}_1$ consists of $Q_i\in \mathcal{Q}$ that incident with at least four components of $G'$,	  
and $\mathcal{Q}_2$ consist of $Q_i\in \mathcal{Q}$ which incident with at most three components of $G'$. Define $N_{G'}(Q_i)$ as the set of neighbors of the vertices in $V(Q_i)$ within $G'$, and let $V(\mathcal{Q}_i)$ be the union of the vertex sets of all $Q$ in $\mathcal{Q}_i$ with $i=1,2$. 

 To prove Theorem \ref{thm}, we proceed by calculating the maximum possible number of $K_s$ that either contain at least one vertex from $\mathcal{Q}_2$ (as established in Theorem \ref{new2}) or are entirely contained within the subgraph induced by the vertices of $\mathcal{Q}_1$ and $G'$ (i.e., $G[V(\mathcal{Q}_1) \cup V(G')]$), as detailed in Theorem \ref{new1}. For convenience, we introduce the notation $N[U]$ to represent the set of vertices $\{v\in N[u]: u\in U\}$, where $N[u]$ denotes the closed neighborhood of vertex $u$, i.e., $N[u]= N(u) \cup \{u\}$. 

An easy but helpful proposition can be obtained:
\begin{proposition}\label{prok}
Let $S$ be the set of any $t\leqslant k-1$ disjoint copies of $P_3$ of $\mathcal{Q}$. Then $G[\bigcup_{Q\in S}N_{G'} [Q]]$ is $(t+1)P_3$-free.
\end{proposition}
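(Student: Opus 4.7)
The plan is to argue by contradiction: assume the induced subgraph $G\bigl[\bigcup_{Q\in S}N_{G'}[Q]\bigr]$ contains $t+1$ pairwise vertex-disjoint copies of $P_3$, say $P^{(1)},\ldots,P^{(t+1)}$, and then exhibit a forbidden copy of $kP_3$ in $G$.

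The main step is to combine these $t+1$ new paths with the $k-1-t$ paths in $\mathcal{Q}\setminus S$. Since $|\mathcal{Q}|=k-1$, the set $\mathcal{Q}\setminus S$ consists of exactly $k-1-t$ vertex-disjoint copies of $P_3$ in $G$. To turn the combination into an honest $kP_3$, I need to check that these $k-1-t$ paths are vertex-disjoint from each $P^{(j)}$. This reduces to unpacking the definitions: each $P^{(j)}$ uses only vertices of $\bigcup_{Q\in S}N_{G'}[Q]$, which is contained in $\bigl(\bigcup_{Q\in S}V(Q)\bigr)\cup V(G')$, whereas by the definition $V(G')=V(G)\setminus\bigcup_{i=1}^{k-1}V(Q_i)$, every path $Q'\in\mathcal{Q}\setminus S$ satisfies $V(Q')\cap V(G')=\varnothing$ and also $V(Q')\cap V(Q)=\varnothing$ for every $Q\in S$, because the members of $\mathcal{Q}$ are pairwise vertex-disjoint by construction.

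Taking the union of the $t+1$ new paths with the $k-1-t$ paths of $\mathcal{Q}\setminus S$ thus yields $(t+1)+(k-1-t)=k$ pairwise vertex-disjoint copies of $P_3$ in $G$, contradicting the hypothesis that $G$ is $kP_3$-free. I do not anticipate a serious obstacle: the proof is essentially a short disjointness bookkeeping exercise once the definitions of $\mathcal{Q}$, $G'$, and the closed neighborhood $N_{G'}[\cdot]$ are unwound, and the only nontrivial ingredient is the standing assumption that $G$ contains no $kP_3$.
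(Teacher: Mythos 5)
Your proposal is correct and is essentially identical to the paper's proof: both combine the hypothetical $(t+1)P_3$ with the remaining $k-1-t$ paths of $\mathcal{Q}\setminus S$ to produce a forbidden $kP_3$. You simply spell out the disjointness bookkeeping (that $\bigcup_{Q\in S}N_{G'}[Q]$ avoids the vertices of $\mathcal{Q}\setminus S$) which the paper leaves implicit.
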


\noindent {\bf Proof.}
    For convenience, let $S=\{Q_1,\ldots, Q_t\}$. If there is a $(t+1)P_3$ contained in $G[\bigcup_{i=1}^t N_{G'}[Q_i]]$, then combining it with $Q_{t+1},\ldots,Q_{k-1}$ are $k$ disjoint copies of $P_3$ in $G$, which is impossible.
\hfill$\square$ 

 Before proving Theorem \ref{thm}, we present several useful lemmas.
\begin{lemma}\label{cor-1}
For each $Q_i\in \mathcal{Q}_1$, one of the following statements holds.
\begin{itemize}
	\item (a) All distinct components incident with $Q_i$ are incident with the same vertex in $V(Q_i)$.
  \item (b) $G[V(Q_i)]$ is $P_3$ and there are two vertices $w\in\{x_i,z_i\}$ and $v\in N_{G'}(Q_i)$, such that $v$ is an isolated vertex in $G'$, $v$ is adjacent to only $w$ in $Q_i$ and all vertices in $N_{G'}(Q_i)-\{v\}$ are adjacent to $y_i$.
\end{itemize}
Moreover, if (b) holds, then $G'$ contains at least one edge.
\end{lemma}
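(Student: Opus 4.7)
The plan is to argue by contradiction: assume $Q_i=x_iy_iz_i\in\mathcal{Q}_1$ satisfies neither (a) nor (b), and derive a contradiction. Two tools drive the argument. (T1) \emph{Shortcutting}: if we can exhibit two vertex-disjoint copies of $P_3$ inside $G[V(Q_i)\cup N_{G'}(Q_i)]$, then together with the $Q_j$ for $j\neq i$ they form $k$ vertex-disjoint copies of $P_3$ in $G$, contradicting $kP_3$-freeness. (T2) \emph{Rearranging}: replace $Q_i$ by a different $P_3$ $Q_i'$ on a vertex set inside $V(Q_i)\cup N_{G'}(Q_i)$; if the new family gains a triangle we contradict the primary maximality of $\mathcal{Q}$, and if its triangle count is unchanged but the resulting $G''=G-V(\mathcal{Q}')$ has more edges than $G'$ we contradict the secondary maximality.

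I split on the shape of $G[V(Q_i)]$. If $G[V(Q_i)]$ is a triangle, I claim (a) must hold. Otherwise there are components $C_1,C_2$ of $G'$ incident with distinct vertices $\alpha,\beta\in V(Q_i)$; letting $\gamma$ be the third vertex, the edges $\alpha\gamma,\beta\gamma$ furnish enough flexibility to produce two disjoint $P_3$'s via (T1). Typical constructions are $u_1v_1\alpha$ paired with $u_2v_2\beta$ when both $C_i$ are edge-components, or $v_1\alpha v_2$ combined with a $P_3$ such as $wz_iy_i$ when both are isolated vertices, using the fact that $Q_i\in\mathcal{Q}_1$ guarantees at least four incident components.

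Now suppose $G[V(Q_i)]$ is a $P_3$ (so $x_iz_i\notin E$), and assume (a) fails; I prove (b). Two preliminary applications of (T1): (i) no incident edge-component can be at $x_i$ or $z_i$, since an edge $uv$ with $vx_i\in E$ yields $uvx_i$ and a second disjoint $P_3$ is built through $y_iz_i$ from any other incident component (one exists because (a) fails); (ii) no vertex of $N_{G'}(Q_i)$ is adjacent to both $x_i$ and $z_i$, since such a vertex $v$ gives $x_ivz_i$ and a second disjoint $P_3$ is always available from the remaining incident components. Consequently every vertex of $N_{G'}(Q_i)$ not adjacent to $y_i$ is an isolated vertex of $G'$ adjacent to exactly one of $\{x_i,z_i\}$. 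Uniqueness of such a vertex is forced by a combination of (T1) and (T2): if two such $v,v'$ lie at the same endpoint, $vx_iv'$ together with a $P_3$ through $y_i$ (from a further incident component, guaranteed by $Q_i\in\mathcal{Q}_1$) yields two disjoint $P_3$'s; if they lie at different endpoints, rearranging $\mathcal{Q}$ by replacing $Q_i$ with a $P_3$ through $y_i$ makes both $x_i$ and $z_i$ pick up new edges in $G''$, so $e(G'')>e(G')$, contradicting maximality. What remains is exactly configuration (b).

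For the moreover claim, suppose (b) holds and $E(G')=\emptyset$. Then every component of $G'$ is an isolated vertex, and since $Q_i\in\mathcal{Q}_1$ is incident with at least four of them and by (b) only $v$ is not adjacent to $y_i$, there exist isolated vertices $u_1,u_2\in N_{G'}(Q_i)$ with $u_1y_i,u_2y_i\in E$. Replace $Q_i$ with $Q_i'=u_1y_iu_2$: since $u_1u_2\notin E$, this is a $P_3$ and not a triangle, so the triangle count of $\mathcal{Q}$ is preserved. The new $G''$ contains $x_i$, and $vx_i\in E$ still holds with $v$ present, so $e(G'')\geqslant e(G')+1$, contradicting the maximality of $e(G')$. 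The main technical obstacle is the case analysis in the third paragraph: pinning down the uniqueness and structure of the non-$y_i$-neighbor requires combining (T1) and (T2) across several subcases distinguished by whether incident components lie at $x_i$, $y_i$, or $z_i$ and whether they are edge-components or isolated vertices.
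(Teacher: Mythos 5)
Your overall strategy is the same as the paper's: either exhibit a $2P_3$ inside $G[V(Q_i)\cup N_{G'}(Q_i)]$ to contradict Proposition \ref{prok}, or swap $Q_i$ for another $P_3$ to contradict the extremal choice of $\mathcal{Q}$ and $G'$; the case split (triangle versus path, then the analysis of the endpoints $x_i,z_i$) also matches. Your proof of the ``moreover'' clause is correct and is in fact a genuine addition, since the paper's own proof never addresses it. However, there is a concrete gap in your uniqueness step, in the subcase where the two special isolated vertices sit at different endpoints.

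There you replace $Q_i$ by ``a $P_3$ through $y_i$'' and assert $e(G'')>e(G')$ because $x_i$ and $z_i$ pick up new edges. But any $P_3$ through $y_i$ avoiding $x_i$ and $z_i$ must consume two components of $G'$ incident with $y_i$, say via $u_1y_iu_2$; if both of those components are edges $u_1u_1'$ and $u_2u_2'$ of $G'$, then $G''$ loses those two edges while gaining only $x_iv$ and $z_iv'$ (note $x_iz_i\notin E(G)$ and, by your earlier steps, $x_i$ and $z_i$ have no further neighbors in $G'$), so $e(G'')=e(G')$ and no contradiction follows; nor does this configuration contain a $2P_3$, since every $P_3$ in it passes through $y_i$. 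The paper's swap is $Q_i\mapsto y_iz_iv'$: it deletes only the isolated vertex $v'$ from $G'$ (no edge lost) and releases $x_i$ together with its neighbor $v$ (one edge gained), so the edge count strictly increases, or the triangle count does if $y_iv'\in E(G)$. Your argument is repairable in the same spirit --- take the replacement path to be $u'uy_i$ whenever some component at $y_i$ is an edge $uu'$, so at most one edge is lost against two gained --- but as written the step does not go through. A smaller instance of the same looseness: in your step (ii) the companion $P_3$ disjoint from $x_ivz_i$ need not exist (for instance when the remaining components are isolated vertices attached only to $x_i$ and $z_i$); the conclusion survives via a different pairing such as $u_1x_iu_2$ and $vz_iy_i$, but the construction you name is not always available.
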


\noindent {\bf Proof.}
Since $Q_i\in \mathcal{Q}_1$, it follows that $Q_i$ is incident with
$p\geqslant 4$ components of $G'$ with $v_1, v_2,\ldots v_p \in N_{G'}(Q_i)$ originating from these components, respectively.
For convenience, let $S=\{v_1, v_2,\ldots v_p\}$.
We only consider $Q_i$ when the statement (a) does not hold, which indicates that $S$ has different neighbors in $V(Q_i)$.

If $G[V(Q_i)]$ is a triangle, then since $S$ has different neighbors in $V(Q_i)$, 
we can easy see that $G[V(Q_i) \cup S]$ contains a $2P_3$, contradicting Proposition \ref{prok}.
Hence, $G[V(Q_i)]$ is a $P_3$.
It is clear that both $x_i$ and $z_i$ have at most one neighbor in $V(G')$ and this possible neighbor is an isolated vertex in $G'$. Otherwise, $G[V(Q_i) \cup S]$ contains a $2P_3$.
Without loss of generality, let $w_1,w_2\in V(G')$ be the neighbors of $x_i$ and $z_i$ (if they exist), respectively. Note that $y_i$ is incident with at least $p-2\geqslant 2$ components of $G'$, denote as $D_1,D_2,\ldots,D_{p-2}$, each of which is distinct from $w_1$ or $w_2$ (note that each $D_i$ represents either an isolated vertex or an edge).
If both $w_1,w_2$ exist and $w_1=w_2$, then $G[V(Q_i) \cup S]$ contains a $2P_3$, a contradiction.
If both $w_1,w_2$ exist and $w_1\neq w_2$, then $Q_1'=Q_1,\ldots,Q_{i-1}'=Q_{i-1},Q'_{i}=y_iz_iw_2,Q'_{i+1}=Q_{i+1},\ldots Q'_{k-1}=Q_{k-1}$ are $k-1$ vertex disjoint $P_3$. However, $|\{i:G[V(Q'_i)]\ \text{is a triangle}\}|$ is not decrease but  $G-\bigcup_{i=1}^{k-1}V(Q'_i)$ has more edges, contradicting the choice of $\mathcal{Q}$.
Hence, only one of $w_1,w_2$  exists, the statement (b) holds.
\hfill$\square$

\medskip

According to Lemma \ref{cor-1}, each $Q_i \in \mathcal{Q}_1$ has a unique vertex, termed the {\em dominating vertex}, that is incident with all but at most one vertices in $N_{G'}(Q_i)$. The remaining two vertices of $Q_i$ are referred to as {\em residual vertices}. 
Each isolated vertex in $G'$ joining some residual vertex is call a {\em special vertex}.  The following result is straightforward from Lemma \ref{cor-1}.

\begin{corollary}\label{cor-11}
Each dominating vertex incident with at least three components of $G'$. Moreover, if the dominating vertex is not  $y_i\in V(Q_i)$, then it incident with at least four components of $G'$.  
\end{corollary}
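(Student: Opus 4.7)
My plan for proving Corollary~\ref{cor-11} is to invoke Lemma~\ref{cor-1} directly and perform a case analysis on which of its two alternatives applies to a given $Q_i\in\mathcal{Q}_1$. The corollary makes two claims---a lower bound of three components for the dominating vertex in general, and a sharper lower bound of four when the dominating vertex is not $y_i$---and each alternative of Lemma~\ref{cor-1} cleanly supplies one of them.

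First I would handle Lemma~\ref{cor-1}(a). Here the single vertex of $V(Q_i)$ incident with every vertex of $N_{G'}(Q_i)$ is, by the definition given just before the corollary, the dominating vertex. Since $Q_i\in\mathcal{Q}_1$ meets at least four components of $G'$, and each such component contributes at least one vertex to $N_{G'}(Q_i)$, this dominating vertex is incident with all four of them. In particular, whenever the dominating vertex is not $y_i$ we must be in case (a) (because case (b) forces the dominating vertex to be $y_i$), so the sharper second claim follows immediately.

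Next I would address Lemma~\ref{cor-1}(b). Here $G[V(Q_i)]$ is a $P_3$, the special vertex $v$ is isolated in $G'$ (so forms its own singleton component) and is adjacent in $V(Q_i)$ only to some $w\in\{x_i,z_i\}$, while every other vertex of $N_{G'}(Q_i)$ is adjacent to $y_i$; by definition, $y_i$ is the dominating vertex. The component $\{v\}$ is therefore not incident with $y_i$, but each of the remaining at least $4-1=3$ components of $G'$ incident with $Q_i$ contributes a vertex of $N_{G'}(Q_i)\setminus\{v\}$ and is hence incident with $y_i$. This yields the first claim in case (b), and the second claim is vacuous here because the dominating vertex equals $y_i$.

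I do not anticipate any real obstacle; the only point worth double-checking is the bookkeeping in case (b), namely that the $p-1\geqslant 3$ components other than $\{v\}$ really are all incident with $y_i$. This follows from the last assertion of Lemma~\ref{cor-1}(b), together with the fact that representatives chosen from distinct components of $G'$ must be distinct vertices of $N_{G'}(Q_i)$.
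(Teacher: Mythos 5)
Your proposal is correct and follows exactly the route the paper intends: the paper gives no explicit proof, asserting the corollary is "straightforward from Lemma~\ref{cor-1}", and your case analysis on alternatives (a) and (b) — with the observation that $v$ is an isolated vertex forming its own component, so at most one of the $p\geqslant 4$ components incident with $Q_i$ can fail to meet $y_i$ in case (b), while case (a) gives all four to the dominating vertex — is precisely that straightforward derivation.
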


We define $L$ as the set comprising all residual vertices of $Q_i \in \mathcal{Q}_1$, noting that $|L| = 2|\mathcal{Q}_1|$. Furthermore, we partition $L$ into $L_1$ and $L_2$: $L_1$ consists of the residual vertices of $Q_i \in \mathcal{Q}_1$ where $G[V(Q_i)]$ forms a $P_3$, and $L_2$ consists of the residual vertices of $Q_i \in \mathcal{Q}_1$ where $G[V(Q_i)]$ forms a triangle.
\begin{lemma}\label{lem-special} 
Each special vertex in $G'$ is adjacent to exact one vertex in $L_1$.
\end{lemma}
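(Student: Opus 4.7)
The plan is to verify the two directions of the ``exactly one'' claim separately. For the lower bound, fix a special vertex $v$; by definition $v$ is an isolated vertex of $G'$ and has a neighbour $w$ that is a residual vertex of some $Q_i\in\mathcal{Q}_1$. I would apply Lemma~\ref{cor-1} to $Q_i$: case (a) is ruled out because it forces every vertex of $N_{G'}(Q_i)$ to attach only to the dominating vertex of $Q_i$, whereas $w$ is a residual vertex. Therefore $Q_i$ satisfies case (b), which in particular means $G[V(Q_i)]$ is a $P_3$, so the residual vertices of $Q_i$ belong to $L_1$; hence $v$ is adjacent to at least one vertex of $L_1$. The same argument also yields that $v$ cannot be adjacent to any vertex of $L_2$, since those arise from triangular $Q_i$'s which must satisfy case (a).

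For the upper bound, I would argue by contradiction: suppose $v$ is adjacent to two distinct vertices of $L_1$, namely a residual vertex of $Q_i$ and a residual vertex of $Q_j$. The case $i=j$ is excluded immediately, since the proof of Lemma~\ref{cor-1}(b) shows that only one of $\{x_i,z_i\}$ has a neighbour in $V(G')$ and that neighbour is unique, so $v$ cannot be adjacent to both residuals of the same $Q_i$. Hence $i\neq j$, and without loss of generality $v$ is adjacent to $x_i$ and to $x_j$. Since both $Q_i$ and $Q_j$ are of $P_3$-type with respective dominating vertices $y_i$ and $y_j$, Corollary~\ref{cor-11} provides at least three components of $G'$ incident to each of $y_i$ and $y_j$, and these components do not include $\{v\}$ because case (b) guarantees that $v$ has no edge to $y_i$ or $y_j$.

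The contradiction is then obtained by exhibiting $k$ pairwise vertex-disjoint copies of $P_3$ in $G$. I would replace $Q_i$ and $Q_j$ with the three paths $x_ivx_j$, $z_iy_iu_1$ and $z_jy_ju_2$, where $u_1\in V(G')$ is a neighbour of $y_i$ and $u_2\in V(G')$ is a neighbour of $y_j$, picked from two distinct components of $G'$. Together with the $k-3$ untouched paths $\{Q_\ell:\ell\neq i,j\}$ this yields $k$ vertex-disjoint $P_3$'s in $G$, contradicting $kP_3$-freeness. The main delicacy is the selection of $u_1$ and $u_2$ from different components: this is possible because $y_i$ and $y_j$ each have at least three candidate components in $G'$ available, so after fixing any component for $u_1$ at least two remain for $u_2$. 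Vertex-disjointness of the construction then follows automatically, since $u_1,u_2,v$ all lie in $V(G')$ while the remaining endpoints lie in $V(Q_i)\cup V(Q_j)$, and $u_1\neq u_2$ by the distinct-component choice.
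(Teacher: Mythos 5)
Your proposal is correct and follows essentially the same route as the paper: the ``at least one'' direction comes from Lemma~\ref{cor-1}(b) (which also rules out adjacency to two residual vertices of the same $Q_i$), and the ``at most one'' direction is the identical contradiction, building the three disjoint paths $x_i v x_j$, $z_i y_i u_1$, $z_j y_j u_2$ with $u_1,u_2$ supplied by Corollary~\ref{cor-11} and completing them with the untouched $Q_\ell$ to violate $kP_3$-freeness (equivalently, Proposition~\ref{prok} with $t=2$). The only cosmetic differences are that you spell out the $i=j$ case and the disjoint selection of $u_1,u_2$ explicitly, which the paper leaves implicit.
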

\noindent {\bf Proof.}
By Lemma \ref{cor-1}(b), each special vertex $w$ is adjacent to at least one vertex in $L_1$.
Suppose to the contrary that $w$ is adjacent to two vertices $a,b\in L_1$ with $a\in V(Q_i)$ and $b\in V(Q_j)$.
By Lemma \ref{cor-1}(b), $i\neq j$ and $y_i,y_j$ are dominating vertices.
Without loss of generality, assume that $a=x_i$ and $b=x_j$.
By Corollary \ref{cor-11},  both $y_i$ and $y_j$ incident with at least three components of $G'$ and then there are two vertices $w_1,w_2\in V(G'-\{w\})$ such that $y_iw_1,y_jw_2\in E(G)$.
Therefore, $z_iy_iw_1,z_jy_jw_2,x_iwx_j$ form a $3P_3$, which contradicts Proposition \ref{prok}.
\hfill$\square$ 

For simplicity, we use the notation $\langle xyz \rangle$ to represent a $P_3$ on the vertices $x$, $y$, and $z$.

\begin{lemma}\label{lem-in} 
$G[L]$ is a $P_3$-free graph.
\end{lemma}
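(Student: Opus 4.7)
The plan is to argue by contradiction. Suppose $\langle abc\rangle\subseteq G[L]$ is a $P_3$, so that $ab,bc\in E(G)$ and $a,b,c\in L$. Each of $a,b,c$ is a residual vertex of some element of $\mathcal{Q}_1$; write $a\in V(Q_\alpha)$, $b\in V(Q_\beta)$, $c\in V(Q_\gamma)$. Because every $Q_i\in\mathcal{Q}_1$ contributes exactly two vertices to $L$, the three indices $\alpha,\beta,\gamma$ cannot all coincide. I split into the case that they are pairwise distinct (Case 1) and the case that exactly two of them coincide (Case 2); in the latter, after relabelling we may assume $\alpha=\gamma\neq\beta$, so $\{a,c\}$ consists of the two residuals of $Q_\alpha$.

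In Case 1, I would apply Proposition~\ref{prok} to $S=\{Q_\alpha,Q_\beta,Q_\gamma\}$, which says that $G^{*}:=G[N_{G'}[Q_\alpha]\cup N_{G'}[Q_\beta]\cup N_{G'}[Q_\gamma]]$ is $4P_3$-free. The contradiction comes from exhibiting four pairwise vertex-disjoint $P_3$'s inside $G^{*}$: the first is $\langle abc\rangle$, and for each $i\in\{\alpha,\beta,\gamma\}$ I construct an extension $R_i$ with middle vertex the dominating vertex $d_i$ of $Q_i$ and endpoints taken from $N_{G'}(d_i)\cup\{a_i'\}$, where $a_i'$ denotes the second residual of $Q_i$. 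Whenever $a_i'd_i\in E(G)$ — which holds when $Q_i$ is a triangle, when $Q_i$ is a $P_3$ with $d_i=y_i$, or when $a_i'$ is the middle of the $P_3$ — I take $R_i=\langle a_i'\,d_i\,v\rangle$ with any $v\in N_{G'}(d_i)$, consuming a single $G'$-vertex. The remaining sub-case is that $Q_i$ is a $P_3$ with $d_i$ an endpoint and $a_i$ the middle; in this sub-case Corollary~\ref{cor-11} supplies $\geqslant 4$ distinct $G'$-components adjacent to $d_i$, so we may fall back to $R_i=\langle v_1\,d_i\,v_2\rangle$ with $v_1,v_2$ from two different components.

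The principal difficulty is guaranteeing that the three $R_i$'s can be chosen pairwise vertex-disjoint (and disjoint from $\{a,b,c\}$) even when the $G'$-neighbourhoods of $d_\alpha,d_\beta,d_\gamma$ overlap. My plan is a Hall-style greedy selection: each $d_i$ supplies at least three distinct $G'$-component slots (four in the exceptional sub-case), each residual $a_i'$ with $a_i'd_i\in E$ contributes a slot private to $Q_i$, and Lemma~\ref{lem-special} bounds how many special vertices can simultaneously be adjacent to residuals from different $Q_i$'s. If the greedy selection still stalls because the three $d_i$'s share essentially the same small $G'$-neighbourhood, I substitute one of the $R_i$'s by a bridging $P_3$ of the form $\langle d_i\,v\,d_j\rangle$, which is legal whenever $v\in N_{G'}(d_i)\cap N_{G'}(d_j)$, thereby freeing up $G'$-vertices for the remaining extensions; the extremal choice of $\mathcal{Q}$ (maximising triangle count, then $e(G')$) rules out the worst patterns, since otherwise one could swap the offending $Q_i$ with the $P_3$ $v\,d_i\,a_i$ (or a similar rearrangement) to strictly increase $e(G')$ or the number of triangles.

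Case 2 is handled by the same strategy but with $|S|=2$ in Proposition~\ref{prok}, where $G[N_{G'}[Q_\alpha]\cup N_{G'}[Q_\beta]]$ is $3P_3$-free and it suffices to exhibit three disjoint $P_3$'s — $\langle abc\rangle$, $R_\alpha$, $R_\beta$. Since $Q_\alpha$ now contributes both of its residuals to $\{a,c\}$, the extension $R_\alpha$ must take the fallback form $\langle v_1\,d_\alpha\,v_2\rangle$, while on the $Q_\beta$ side the full range of choices is still available; the disjointness accounting is then strictly easier than in Case 1 because only two extensions compete for the $G'$-slots. The hard part of the whole argument is therefore the bookkeeping of the three extensions in Case 1, for which Lemma~\ref{cor-1}, Corollary~\ref{cor-11} and Lemma~\ref{lem-special} together with the extremal properties of $\mathcal{Q}$ are the essential tools.
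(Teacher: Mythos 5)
Your overall strategy is the same as the paper's: assume a $P_3$ in $G[L]$, extend each $Q_i$ involved to a further $P_3$ through its dominating vertex and its $G'$-neighbours, and contradict Proposition~\ref{prok}. However, there are two concrete gaps. First, the case analysis is incomplete: the path $\langle abc\rangle$ is symmetric only under swapping $a$ and $c$, so when exactly two of $\alpha,\beta,\gamma$ coincide you may \emph{not} relabel to assume $\alpha=\gamma$. The configuration in which the \emph{centre} $b$ and one endpoint are the two residuals of the same $Q$ (possible, e.g., when that $Q$ induces a triangle, or when its dominating vertex is an endpoint of the underlying $P_3$ so that its two residuals are adjacent) is never treated. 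The paper spends the first half of its proof eliminating exactly this: it first shows the centre's $Q_i$ cannot induce a triangle, then that none of the three vertices can be a middle vertex $y$, and only then concludes that the neighbours of the centre in $L$ lie in other $Q$'s.

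Second, the pairwise disjointness of your extensions $R_\alpha,R_\beta,R_\gamma$ is not actually established. You concede that the greedy/Hall selection can stall when the dominating vertices share a small common $G'$-neighbourhood, but your rescue --- a bridging path $\langle d_i\,v\,d_j\rangle$ --- consumes two dominating vertices in a single $P_3$, so it yields at most three disjoint copies of $P_3$ where Proposition~\ref{prok} with $|S|=3$ requires four; the further appeal to the extremal choice of $\mathcal{Q}$ is not worked out. The paper sidesteps this difficulty: after the preliminary reductions above, every extension takes the form $\langle(\text{other residual})\,(\text{dominating vertex})\,(\text{one }G'\text{-vertex})\rangle$, so each extension costs only one vertex of $G'$, and since each dominating vertex meets at least three components of $G'$ (Corollary~\ref{cor-11}) a system of distinct representatives exists trivially. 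Without reproducing those reductions, the bookkeeping problem you identify as ``the hard part'' remains unsolved in your proposal.
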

\noindent {\bf Proof.}
 Suppose, instead, there exists a vertex $\alpha \in V(Q_i)$ of $G[L]$ with degree at least two (denoted by $\beta, \gamma$ the neighbors of $\alpha$).
Note $\beta\alpha\gamma$ forms a copy of $P_3$ in $G[L]$, and at most two of $\{\alpha, \beta, \gamma\}$ belong to the same $Q \in \mathcal{Q}_1$. Assume $\beta, \gamma$ are from $Q_j, Q_k$ in $\mathcal{Q}_1$ (where $\{i, j, k\}$ is a multiset), with $\alpha', \beta', \gamma'$ as their dominating vertices, and $\alpha_1, \beta_1, \gamma_1$ as their left vertices, respectively. 
If $G[V(Q_i)]$ is a triangle (without loss of generality, assume that $i\neq j$), then by Lemma \ref{cor-1}, $\alpha'$ incident with  at least four components of $G'$ and $\beta'$ incident with at least three components of $G'$, implying there are four distinct vertex $a,b,c,d$ such that $a\alpha',b\alpha',c\beta',d\beta'\in E(G)$. It is clear that $\beta\alpha\alpha_1,a\alpha' b,c\beta' d$ form a copy of $3P_3$ in the graph induced by $V(Q_i)\cup V(Q_j)\cup V(G')$, contradicting Proposition \ref{prok}. Thus, $G[V(Q_i)]$ is a $P_3$.

If there exists a vertex among $\{\beta, \gamma\}$ that belongs to $\{y_j, y_k\}$, without loss of generality, let $\beta=y_j$. Then $y_j$ is not the dominating vertex of $Q_j$, implying $\beta'$ incident with at least four components of $G'$ by Corollary \ref{cor-11}, and at least one of $\alpha$ or $\gamma$ is not in $Q_j$. Assume that $\alpha \notin Q_j$, implying $i \neq j$.
Since the dominating vertex $\alpha'$ of $Q_i$ incident with at least three components of $G'$ by Corollary \ref{cor-11}, there are four distinct vertices $a,b,c,d\in V(G')$ such that $a\alpha',b\alpha',c\beta',d\beta'\in E(G)$.
Hence, $ a\alpha' b,c\beta' d,\alpha\beta\beta_1$ form a $3P_3$, which contradicts Proposition \ref{prok}. 
If $\alpha=y_i$, then based on a proof similar to $\beta = y_j$, a contradiction is obtained.
Therefore, there is no vertex in $\{\alpha,\beta,\gamma\}$ belongs to $\{y_i,y_j,y_k\}$.

If $\beta$ or $\gamma$ is in $Q_i$, then either $y_i$ belongs to $\{\alpha,\beta,\gamma\}$ or $G[V(Q_i)]$ is a triangle, which is impossible.
Hence, each neighbor of $\alpha$ in $L$ is not in $Q_i$. 
Without loss of generality, set $\alpha = x_i$.
If $j \neq k$, since each of the dominating vertices $\alpha',\beta',\gamma'$ has at least three neighbors in $G'$, there exist $a', b', c'$ from distinct components of $G'$  with $a'\alpha', b'\beta', c'\gamma' \in E(G)$. 
Then $\beta\alpha\gamma,\langle\alpha_1y_ia'\rangle,\langle\beta_1y_jb'\rangle,
\langle\gamma_1y_kc'\rangle$ form a $4P_3$ in $G[V(Q_i) \cup V(Q_j) \cup V(Q_k) \cup \{a', b', c'\}]$, contradicting Proposition \ref{prok}. 
If $j = k$, then $\beta' = \gamma' = y_j$, $\{\beta, \gamma\} = \{x_j, z_j\}$, and there exist $a', b', c' \in V(G')$ with $a'\alpha', b'\beta', c'\beta' \in E(G)$. Clearly, $a'y_i\alpha_1,b'y_jc', \beta\alpha\gamma$ form a $3P_3$ in $G[V(Q_i) \cup V(Q_j) \cup \{a', b', c'\}]$, which is impossible.
\hfill$\square$ 

\medskip

Assume that $G'$ contains $r$ edges and $t$ isolated vertices. For convenience, let $C$ be the set of dominating vertices in $\mathcal{Q}_1$ and we define
$$		f_s(|\mathcal{Q}_1|) =(2r+t){|\mathcal{Q}_1|\choose s-1} + r{|\mathcal{Q}_1|\choose s-2}$$
and
$$	g_s (|\mathcal{Q}_1|) =  {|\mathcal{Q}_1|\choose s}+2|\mathcal{Q}_1|{|\mathcal{Q}_1|\choose s-1}+|\mathcal{Q}_1| {|\mathcal{Q}_1| \choose s-2}.$$
\begin{lemma}\label{new1}
$G[V(\mathcal{Q}_1) \cup V(G')]$ has at most $f_s(|\mathcal{Q}_1|) + g_s(|\mathcal{Q}_1|)$ $s$-cliques for $3 \leqslant s \leqslant k+1$. This maximum occurs only if $L_1 = \emptyset$ and $G[V(\mathcal{Q}_1) \cup V(G')]=K_{|C|}+M_{\ell}$, where $\ell=|V(\mathcal{Q}_1)| + |V(G')| - |C|$.
\end{lemma}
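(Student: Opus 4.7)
The plan is to split $V(\mathcal{Q}_1)\cup V(G')$ into $C$ (the $|\mathcal{Q}_1|$ dominating vertices) and $D=L\cup V(G')$, and to count $s$-cliques according to how many of their vertices lie in $D$.

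I would first observe that $G[D]$ is triangle-free: $G[V(G')]$ and $G[L]$ are matchings (the latter by Lemma~\ref{lem-in}), while the only edges between $L$ and $V(G')$ are the special-$L_1$ edges, each running from an isolated special vertex of $G'$ to a unique vertex of $L_1$ by Lemma~\ref{cor-1}(b) combined with Lemma~\ref{lem-special}; no combination of these edges can form a triangle. Hence every $s$-clique meets $D$ in at most two vertices, yielding
\[
    \mathcal{N}(G[V(\mathcal{Q}_1)\cup V(G')],K_s)\ \leq\ \binom{|C|}{s}+\sum_{v\in D}\binom{d_C(v)}{s-1}+\sum_{e\in E(G[D])}\binom{d_C(e)}{s-2},
\]
where $d_C(v)=|N(v)\cap C|$ and $d_C(uv)=|N(u)\cap N(v)\cap C|$. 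Using $|C|=|\mathcal{Q}_1|$ and $d_C(\cdot)\leq|\mathcal{Q}_1|$, the first two terms are already bounded by $\binom{|\mathcal{Q}_1|}{s}$ and $(2|\mathcal{Q}_1|+2r+t)\binom{|\mathcal{Q}_1|}{s-1}$, which match all of $g_s$ and the vertex part of $f_s$.

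The remaining task is the edge bound $|E(G[D])|\leq|\mathcal{Q}_1|+r$. I would classify each $Q_i\in\mathcal{Q}_1$ as Type T ($G[V(Q_i)]$ is a triangle), Type Pa ($P_3$ with the middle vertex dominating) or Type Pb ($P_3$ with an endpoint dominating). Types T and Pb each supply exactly one matching edge of $G[L]$ between their two residuals. Because $G[L]$ is $P_3$-free, a Pa residual cannot be matched to a T or Pb residual (whose only $L$-partner is already forced by the internal edge of $V(Q_i)$), so Pa residuals may match only to other Pa residuals. The key structural claim is: \emph{if $Q_i$ is Type Pa with special-$L_1$ edge $wv$, then $w$ is unmatched in $G[L]$}. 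If instead $w$ were matched to a Pa residual $w'$ in some $Q_j$ with $j\neq i$, then since Corollary~\ref{cor-11} guarantees that $y_i$ and $y_j$ each meet at least three components of $G'$, one can pick $a\in N_{G'}(y_i)\setminus\{v\}$ and $b\in N_{G'}(y_j)\setminus\{v,a\}$ from distinct components and form the three vertex-disjoint paths
\[
    v\,w\,w', \qquad a\,y_i\,w'', \qquad b\,y_j\,w_j''
\]
(with $w''$ and $w_j''$ the other residuals of $Q_i$ and $Q_j$); together with the $k-3$ paths from $\{Q_l:l\notin\{i,j\}\}$ these constitute $k$ disjoint copies of $P_3$, contradicting $kP_3$-freeness. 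A similar, slightly more involved construction then rules out the remaining ways a matching in $L$ could overshoot once special-$L_1$ edges are present, and the accumulated bookkeeping delivers $|E(G[L])|+|E(L,V(G'))|\leq|\mathcal{Q}_1|$ and hence $|E(G[D])|\leq|\mathcal{Q}_1|+r$.

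For equality, each inequality above must be tight: $C$ must be an $|\mathcal{Q}_1|$-clique, every $v\in D$ must be adjacent to all of $C$, and $|E(G[D])|=|\mathcal{Q}_1|+r$ with each edge of $G[D]$ common-neighbored by all of $C$. Forcing the matching in $L$ to attain size $|\mathcal{Q}_1|$ then excludes any special-$L_1$ edge, and the Pa/Pb analysis in fact collapses to $L_1=\emptyset$; this pins down $G[V(\mathcal{Q}_1)\cup V(G')]=K_{|C|}+M_\ell$ with $\ell=2|\mathcal{Q}_1|+2r+t$, as claimed. The hard part will be the structural claim above: systematically excluding, via $kP_3$-freeness, all configurations in which a Pa residual carrying a special-$L_1$ edge could also be matched inside $L$, which in turn hinges on exploiting the distinct $G'$-components guaranteed by Corollary~\ref{cor-11} to find enough disjoint auxiliary vertices for the forbidden $P_3$-decompositions.
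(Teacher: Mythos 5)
Your decomposition (cliques classified by how many vertices they have in $D=L\cup V(G')$, using triangle-freeness of $G[D]$) is a legitimate reorganization of the count, and the first two terms do match $\binom{|\mathcal{Q}_1|}{s}$ plus the $\binom{|\mathcal{Q}_1|}{s-1}$-parts of $f_s+g_s$. The genuine gap is the edge bound $|E(G[D])|\leqslant|\mathcal{Q}_1|+r$, i.e.\ $e(G[L])+(\text{number of special edges})\leqslant|\mathcal{Q}_1|$. Your structural claim (the residual carrying a special edge is unmatched in $G[L]$) is not enough even if true: with $a,b,c$ triples of types T, Pa, Pb and $|S|$ special edges, it only yields $e(G[L])+|S|\leqslant (a+c)+(b-|S|/2)+|S|=|\mathcal{Q}_1|+|S|/2$, which overshoots whenever $|S|>0$. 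Worse, the ``remaining ways a matching in $L$ could overshoot'' cannot all be excluded by the $kP_3$-freeness constructions you propose. For example, take two Pa triples $x_iy_iz_i$, $x_jy_jz_j$ with special vertices $v_i,v_j$ adjacent to $x_i,x_j$ and with $z_iz_j\in E(G[L])$: every attempt to build three disjoint copies of $P_3$ inside $N_{G'}[Q_i]\cup N_{G'}[Q_j]$ dead-ends, and this configuration is in fact only ruled out by the \emph{secondary} extremal criterion in the definition of $\mathcal{Q}$ (replacing $Q_i,Q_j$ by $v_ix_iy_i,\,v_jx_jy_j$ keeps the triangle count but moves the edge $z_iz_j$ into $G'$, increasing $e(G')$) --- a tool your plan never invokes.

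For comparison, the paper never bounds the number of edges of $G[D]$ at all. It charges each clique through a special vertex $u$ to $u$ itself: since by Lemma~\ref{cor-1}(b) $u$ is \emph{not} adjacent to the dominating vertex $y_i$ of its own triple, $|N(u)\cap V(\mathcal{Q}_1)|\leqslant 1+(|C|-1)=|C|$, so all cliques containing $u$ (including those using the special edge) number at most $\binom{|C|}{s-1}$, and the special edge never appears as a separate $\binom{\cdot}{s-2}$-term. The same repair is available inside your framework without any new structural claims: for a special vertex $u$ with special edge $e=ua$ you have $d_C(u)\leqslant|C|-1$, hence
\[
\binom{d_C(u)}{s-1}+\binom{d_C(e)}{s-2}\ \leqslant\ \binom{|C|-1}{s-1}+\binom{|C|-1}{s-2}\ =\ \binom{|C|}{s-1},
\]
so special edges can be absorbed into the vertex term and you only need the easy bound $e(G[L])\leqslant|\mathcal{Q}_1|$ from Lemma~\ref{lem-in} together with the $r$ edges of $G'$. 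As written, though, your argument for the central edge estimate is incomplete, so the proof does not go through.
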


\noindent {\bf Proof.}
 By Lemma \ref{lem-in}, each $K_s$ in $G[V(\mathcal{Q}_1)]$ has at most two vertices in $L$. Thus, the set of $s$-cliques in $G[V(\mathcal{Q}_1) \cup V(G')]$ can be partitioned into four subsets.

\medskip

\noindent{\bf Subset 1.} $s$-cliques spanning vertices from  both $G'$ and $G[V(\mathcal{Q}_1)]$.

By Lemma \ref{cor-1},
$|C|=|\mathcal{Q}_1|$.
For each special vertex $u$, by Lemmas \ref{cor-1}(b) and \ref{lem-special}, $u$ is adjacent to precisely one residual vertex $a\in L_1$ (without loss of generality, assume that $a\in Q_i$) and dominating vertices of some $Q_j$'s with $j\neq i$.
Hence, there are at most ${1+|C|-1 \choose s-1}= {|C|\choose s-1}$ $s$-cliques containing $u$. 
For each $u'\in V(G')$ that is not a special vertex, the neighbors of $u'$ in $V(\mathcal{Q}_1)$ form a subset of $C$. 
Hence, there are at most 
\begin{align}\label{out-eq}
	|G'|{|C|\choose s-1} + r{|C|\choose s-2} = (2r+t){|\mathcal{Q}_1|\choose s-1} + r{|\mathcal{Q}_1|\choose s-2}=f_s(|\mathcal{Q}_1|)
\end{align} $s$-cliques. Specifically, if there is no special vertex in $G'$, then the equality holds only if $G[C\cup \{u\}]$ is a complete graph for all $u\in V(G')$. 
\medskip

\noindent{\bf Subset 2.} $s$-cliques in $G[C]$.

Obviously, the number of such $s$-cliques is at most ${|C|\choose s}={|\mathcal{Q}_1|\choose s}$.
\medskip

\noindent{\bf Subset 3.} $s$-cliques in $G[V(\mathcal{Q}_1)]$ with exactly one edge from $G[L]$.

Let $abca$ be a triangle with $ab$ in $G[L]$. By Lemma \ref{lem-in}, $c$ is the  dominating  vertex of some $Q_i \in\mathcal{Q}_1$. Hence, there are at most \begin{align}\label{in-eq} e(G[L]){|C|\choose s-2}\leqslant \frac{|L|}{2}{|\mathcal{Q}_1|\choose s-2}= |\mathcal{Q}_1|{|\mathcal{Q}_1|\choose s-2} \end{align}
$s$-cliques in $G[V(\mathcal{Q}_1)]$ with exactly one edge from $G[L]$. 
Moreover, equality in Ineq. (\ref{in-eq}) holds if and only if $L_1 = \emptyset$ and $G[C \cup \{a\}]$ is a complete graph for all $a \in L$.

The sufficiency is clearly established.
Obviously, when the equality in Ineq. (\ref{in-eq}) holds, $G[C \cup {a}]$ is a complete graph for all $a \in L$ and $G[L] = M_{|L|}$. If $L_1\neq\emptyset$, then there exists a $P_3$ in $\mathcal{Q}_1$ (let us denote it as $Q_i$) such that $G[V(Q_i)]$ is a $P_3$.
Let $u, v \in L_1 \cap V(Q_i)$. If $y_i \in \{u, v\}$, since $G[C \cup {a}]$ is a complete graph for all $a \in L$, we have $G[V(Q_i)] = K_3$, which contradicts $u, v \in L_1$. Therefore, $y_i \notin \{u, v\}$, i.e., $G[V(Q_i)]=u y_i v$. Hence,  the dominating vertex of $Q_{\ell} \in \mathcal{Q}_1$ is $y_{\ell}$. 
Suppose $uw \in G[L]$. If $w \in L_2$, then by the definition of $L_2$, $w$ must have at least two neighbors in $L$, which contradicts Lemma \ref{lem-in}. Hence, $w\in (V(Q_j) \cap L_1)$ and $i \neq j$.
Since $Q_j \in \mathcal{Q}_1$, by Corollary \ref{cor-11}, $y_j$ has at least three neighbors in $G'$. It follows that there exist distinct vertices $a', b'$ in $G'$ such that $a'y_j, b'y_j \in E(G)$. Since Ineq. (\ref{in-eq}) implies that $G[C \cup {a}]$ is a complete graph for all $a \in L$, the vertices $u, w, y_i$ form a triangle. Let $V' = V(Q_i) \cup V(Q_j) \cup \{a', b'\}$. It is evident that $G[V']$ contains a $K_3 \cup P_3$, contradicting our selection of $\mathcal{Q}$.
Therefore, $L_1\neq\emptyset$.
\medskip

\medskip




\noindent{\bf Subset 4.} $s$-cliques in $G[V(\mathcal{Q}_1)]$ with exactly one vertex in $G[L]$.

It is obvious that this number is at most 
$$
	|L|{|C|\choose s-1}=2|\mathcal{Q}_1|{|\mathcal{Q}_1|\choose s-1}.
$$

In summary, the total number of $s$-cliques in $G[V(\mathcal{Q}_1) \cup V(G')]$ is bounded from above by
$$\quad 	f_s(|\mathcal{Q}_1|)+{|\mathcal{Q}_1|\choose s} + |\mathcal{Q}_1| {|\mathcal{Q}_1| \choose s-2} +
		2|\mathcal{Q}_1|{|\mathcal{Q}_1|\choose s-1}=f_s(|\mathcal{Q}_1|) +g_s(|\mathcal{Q}_1|).$$
And the upper bound can only be achieved if $L_1=\emptyset$, Eq. (\ref{out-eq}) holds and Ineq. (\ref{in-eq}) holds.
However, $L_1=\emptyset$ implies that there is no special vertex in $G'$ by Lemma \ref{cor-1}(b).
\hfill$\square$ 

\medskip

For convenience, we define
$$f(k,s)=\max\left\{{3k-3\choose x}-{|V(\mathcal{Q}_1)|\choose x}:x\in\{s-2,s-1,s\}\right\}$$
with integers $k,s$.

\begin{lemma}\label{new2}
There are at most $h_s(|\mathcal{Q}_2|)=(1+9|\mathcal{Q}_2|)f(k,s)$ $s$-cliques contain vertices of $ V(\mathcal{Q}_2)$  from $3\leqslant s\leqslant k+1$.
\end{lemma}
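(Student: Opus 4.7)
The plan is to partition each $s$-clique $K$ with $K\cap V(\mathcal{Q}_2)\neq\emptyset$ according to $|K\cap V(G')|$, and use a different counting strategy for each value. Since $G'$ is the disjoint union of isolated vertices and independent edges, it is $K_3$-free, so $|K\cap V(G')|\in\{0,1,2\}$ and correspondingly $|K\cap V(\mathcal{Q})|\in\{s,s-1,s-2\}$; these three sizes match exactly the values of $x$ appearing in $f(k,s)$. It suffices to treat $|\mathcal{Q}_2|\geq 1$, since otherwise there are no such $K$ at all and $h_s(0)=f(k,s)$ trivially bounds $0$.

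When $|K\cap V(G')|=0$, i.e.\ $K\subseteq V(\mathcal{Q})$, I count globally: $K$ is an $s$-subset of $V(\mathcal{Q})$ that meets $V(\mathcal{Q}_2)$, and there are exactly $\binom{3k-3}{s}-\binom{|V(\mathcal{Q}_1)|}{s}\leq f(k,s)$ such subsets. This yields the leading $1\cdot f(k,s)$ term of $h_s(|\mathcal{Q}_2|)$. For $|K\cap V(G')|\in\{1,2\}$, I instead fix $Q_i\in\mathcal{Q}_2$ and count $s$-cliques $K$ with $V(Q_i)\cap K\neq\emptyset$, then sum over $Q_i$ (tolerating overcount for cliques meeting several $Q_i\in\mathcal{Q}_2$). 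Writing $K=S\cup(K\cap V(G'))$ with $S\subseteq V(\mathcal{Q})$, the set $S$ has size $s-1$ or $s-2$ and meets $V(Q_i)$, so the number of admissible $S$ is at most $\binom{3k-3}{|S|}-\binom{3k-6}{|S|}$. The hypothesis $|\mathcal{Q}_2|\geq 1$ gives $|V(\mathcal{Q}_1)|\leq 3k-6$, hence $\binom{3k-6}{|S|}\geq\binom{|V(\mathcal{Q}_1)|}{|S|}$, and this count is at most $\binom{3k-3}{|S|}-\binom{|V(\mathcal{Q}_1)|}{|S|}\leq f(k,s)$.

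Now I count the $V(G')$ part for a fixed $Q_i$. If $|K\cap V(G')|=1$, the unique vertex $u$ must be adjacent to some vertex of $V(Q_i)\cap S$, so $u\in N_{G'}(V(Q_i))$; since $Q_i\in\mathcal{Q}_2$ is incident with at most three components of $G'$, each of size at most two, there are at most six choices of $u$, giving at most $6\cdot f(k,s)$ cliques per $Q_i$. If $|K\cap V(G')|=2$, the two vertices form an entire edge-component of $G'$ that is incident with $V(Q_i)$; at most three such components exist, yielding at most $3\cdot f(k,s)$ cliques per $Q_i$. Summing over $Q_i\in\mathcal{Q}_2$ and adding the global term from the first case produces $f(k,s)+|\mathcal{Q}_2|(6+3)f(k,s)=(1+9|\mathcal{Q}_2|)f(k,s)=h_s(|\mathcal{Q}_2|)$.

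The main technical step is packaging the per-$Q_i$ subset counts into a single $f(k,s)$ uniformly across $|S|\in\{s-1,s-2\}$; this rests on the inequality $\binom{3k-6}{x}\geq\binom{|V(\mathcal{Q}_1)|}{x}$, which is exactly where the standing assumption $|\mathcal{Q}_2|\geq 1$ enters. The remainder is routine bookkeeping about $G'$-neighborhoods of elements of $\mathcal{Q}_2$: the factor $6$ bounds $|N_{G'}(V(Q_i))|$ and the factor $3$ bounds the number of edge-components of $G'$ incident with $V(Q_i)$, both direct consequences of the definition of $\mathcal{Q}_2$.
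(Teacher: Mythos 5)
Your proof is correct and follows essentially the same route as the paper's: both split the $s$-cliques by their intersection with $V(G')$ (of size $0$, $1$, or $2$), bound the $V(\mathcal{Q})$-part in each case by $f(k,s)$, and use the fact that each $Q_i\in\mathcal{Q}_2$ meets at most three components of $G'$ (each of size at most two) to obtain the factor $9|\mathcal{Q}_2|$. The only cosmetic difference is that you sum over $Q_i\in\mathcal{Q}_2$, tolerating overcounts, whereas the paper sums over the vertices and edges of the components of $G'$ adjacent to $\mathcal{Q}_2$; the resulting arithmetic is identical.
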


\noindent {\bf Proof.}
Recall that $\mathcal{Q}_2 \subseteq \mathcal{Q}$ consists of $Q_i$ incident with at most three components of $G'$. We categorize the $s$-cliques containing vertices from $V(\mathcal{Q}_2)$ into two subsets.
\medskip

\noindent{\bf Subset A.} $s$-cliques that are either fully in $G[V(\mathcal{Q}_2)]$ or consist of vertices from $V(\mathcal{Q}_2)$ and $V(\mathcal{Q}_1)$.

Since $V(\mathcal{Q}) = V(\mathcal{Q}_1) \cup V(\mathcal{Q}_2)$ and $|V(\mathcal{Q})| = 3k - 3$, the number of such $s$-cliques is obviously at most
$${3k-3 \choose s} - {|V(\mathcal{Q}_1)| \choose s}.$$
\medskip
\noindent{\bf Subset B.} $s$-cliques include vertices from both $V(G')$ and $V(\mathcal{Q}_2)$.

Denote by $G''$ the subgraph of $G'$ consisting of components with neighbors in $\mathcal{Q}_2$.
For each vertex $w\in V(G'')$, there are at most 
$${3k-3\choose s-1}-{|V(\mathcal{Q}_1)|\choose s-1}$$ 
$s$-cliques containing $w$ and vertices from $\mathcal{Q}_2$. For each edge $f$ of $G''$, there are at most 
$${3k-3\choose s-2}-{|V(\mathcal{Q}_1)|\choose s-2}$$ 
$s$-cliques containing $f_i$ and vertices from $\mathcal{Q}_2$. Since each $Q_i \in \mathcal{Q}_2$ is incident with at most 3 components of $G'$, $G''$ has at most $3|\mathcal{Q}_2|$ components. As $G'$ is $P_3$-free, we can assume $G''$ has $a$ edges and $b$ isolated vertices, with $a \leqslant a+b \leqslant 3|\mathcal{Q}_2|$.
It follows that there are at most
\begin{align*}
	(2a+b)\left[{3k-3\choose s-1}-{|V(\mathcal{Q}_1)|\choose s-1}\right]+a\left[{3k-3\choose s-2}-{|V(\mathcal{Q}_1)|\choose s-2}\right]
	\leqslant 9|\mathcal{Q}_2| f(k,s)
\end{align*}
$s$-cliques consist of vertices from both $V(G')$ and $V(\mathcal{Q}_2)$.

Above all, the number of $s$-cliques containing at least one vertex from $V(\mathcal{Q}_2)$ is at most
\begin{align*}
	{3k-3\choose s}-{|V(\mathcal{Q}_1)|\choose s}+9|\mathcal{Q}_2| f(k,s)\leqslant (1+9|\mathcal{Q}_2|)f(k,s)=h_s(|\mathcal{Q}_2|).
\end{align*}
This completes the proof.
\hfill$\square$ 
\medskip

By using Lemmas \ref{new1} and \ref{new2}, we obtain the proof of Theorem \ref{thm}.
\medskip

\noindent {\bf Proof of Theorem \ref{thm}.}
Let $G$ be an $n$-vertex $k  P_3$-free graph  with the  maximum number of $s$-cliques, where $k\geqslant s\geqslant 3$
and $n\geqslant\max \{ g(k,s), 3k\}$.
If $G$ contains no $(k-1)P_3$, we add edges to $G$ until the resulting graph $G_1$ is a $k$-$P_3$-free graph that contains a $(k-1)P_3$.
Clearly, $G_1$ has at least as many $s$-cliques as $G$. Thus, we assume $G$ contains $(k-1)P_3$.
Let $V(\mathcal{Q})=V(\mathcal{Q}_1)\cup V(\mathcal{Q}_2)$ and $G'$ be as defined at the start of this section.

  The $s$-cliques in $G$ can be categorized into two subsets:
\begin{itemize}
  \item $s$-cliques within $G[V(\mathcal{Q}_1) \cup V(G')]$. According to Lemma \ref{new1}, $G[V(\mathcal{Q}_1)\cup V(G')]$ contains no more than $f_s(|\mathcal{Q}_1|) + g_s(|\mathcal{Q}_1|)$ $s$-cliques for $3 \leqslant s \leqslant k+1$.
  \item $s$-cliques that includes at least one vertices from $\mathcal{Q}_2$.
By Lemma \ref{new2}, there are at most $h_s(|\mathcal{Q}_2|)=(1+9|\mathcal{Q}_2|)f(k,s)$ such $s$-cliques, where $3\leqslant s\leqslant k+1$.
\end{itemize}
Therefore, the number of $s$-cliques in $G$ is at most
\begin{align*}
	&\quad f_s(|\mathcal{Q}_1|)+g_s(|\mathcal{Q}_1|)+h_s(|\mathcal{Q}_2|)\\&={|\mathcal{Q}_1|\choose s}+(2|\mathcal{Q}_1| +2r +t){|\mathcal{Q}_1|\choose s-1}+(|\mathcal{Q}_1| +r) {|\mathcal{Q}_1| \choose s-2}+(1+9|\mathcal{Q}_2|)f(k,s),
\end{align*}
where $r$ and $t$ are the numbers of edges and isolated vertices of $G'$,  respectively. Note that $|\mathcal{Q}_1| + |\mathcal{Q}_2| = k-1$ and $2r + t = n - 3k + 3$. If $|\mathcal{Q}_2| = 0$, then $f(k,s) = 0$. Additionally, observe that $r \leqslant \left\lfloor \frac{n - 3k + 3}{2} \right\rfloor$. We conclude that
\begin{align*}
	&\quad f_s(|\mathcal{Q}_1|)+g_s(|\mathcal{Q}_1|)+h_s(|\mathcal{Q}_2|)\\&= {k-1\choose s} + (n-k+1){k-1\choose s-1}+ (k+r-1){k-1\choose s-2}\\
	&\leqslant \binom{k-1}{s}+(n-k+1)\binom{k-1}{s-1} + \left\lfloor\frac{n-k+1}{2}\right\rfloor \binom{k-1}{s-2}\\
 &=f(n,k,s).
\end{align*}
Given that $K_{k-1} + M_{n-k+1}$ is an $n$-vertex $k P_3$-free graph containing $f(n,k,s)$ $s$-cliques, and $G$ maximizes the number of $s$-cliques among all such graphs, it follows that
$f(n,k,s) = f_s(|\mathcal{Q}_1|) + g_s(|\mathcal{Q}_1|) + h_s(|\mathcal{Q}_2|)$ and $r = \left\lfloor \frac{n - 3k + 3}{2} \right\rfloor$.
Furthermore, the equality in (\ref{in-eq}) holds. By Theorem \ref{new1}, $G$ is isomorphic to $K_{k-1} + M_{n-k+1}$.

If $|\mathcal{Q}_2|\geqslant 1$, then $|\mathcal{Q}_1|\leqslant k-2$.
Since $|\mathcal{Q}_2|\leqslant k-1$, it follows that 
\begin{align*}
	&\quad f_s(|\mathcal{Q}_1|)+g_s(|\mathcal{Q}_1|)+h_s(|\mathcal{Q}_2|)\\&\leqslant {k-2\choose s}+(n-k-1){k-2\choose s-1}+(k+r-2){k-2\choose s-2} +(1+9|\mathcal{Q}_2|)f(k,s)\\
	&\leqslant {k-2\choose s}+(n-k-1){k-2\choose s-1}+(k+r-2){k-2\choose s-2} + (9k-8)\\
	&\qquad \times\max\left\{{3k-3\choose x}:x\in\{s-2,s-1,s\}\right\}.
\end{align*}
Hence,
\begin{align*}
	&\quad f(n,k,s)-(f_s(|\mathcal{Q}_1|)+g_s(|\mathcal{Q}_1|)+h_s(|\mathcal{Q}_2|))\\
 &> (n-k-1)\left[{k-1\choose s-1}-{k-2\choose s-1} \right]- (9k-8)\\
	& \qquad \times \max\left\{{3k-3\choose x}:x\in\{s-2,s-1,s\}\right\}\\
	&\geqslant (n-k-1){k-2\choose s-2}-(9k-8)\max\left\{{3k-3\choose x}:x\in\{s-2,s-1,s\}\right\}\\
	&\geqslant0,
\end{align*}
since $n\geqslant g(k,s)={k-2\choose s-2}^{-1} \max \left\{{3k-3\choose x}:x\in \{s-2,s-1,s\}\right\} (9k-8)+k+1.$
This implies that $e(G)<f(n,k,s)$, which contradicts the choice of $G$.
\hfill$\square$

\medskip

Finally, we present the proof of Theorem \ref{theorem3.1}. 
\medskip

\noindent {\bf Proof of Theorem \ref{theorem3.1}.}
Let  
$n\geqslant 6\binom{3k-1}{k}+k$ and $s= k+1$. At this point, $f(n,k,k+1)> {{3k-1}\choose k+1}$.
We proceed by induction on $k$. The case $k=2$ is covered by Theorem \ref{theorem3.4}.
Let $k\geqslant 3$. Consider an $n$-vertex $kP_3$-free graph $H$ with a maximum number of $s$-cliques, subject to which, let $H$ contain the largest number of edges. Therefore, $H$ contains a graph $H_1=(k-1)P_3$ as subgraph. Suppose that $\mathcal{N}(H,K_{k+1}) > f(n,k,k+1)$.
For any copy of $P_3$ in $H$, since $H-P_3$ forms a $(k-1)P_3$-free graph with at least $6\binom{3(k-1)-1}{k-1} + k-1$ vertices, by induction on $k$, the number of $(k+1)$-cliques incident with the graph $P_3$ must be at least
\begin{align}
	&\mathcal{N}(H,K_{k+1})-\mbox{ex}(n-3, K_{k+1}, (k-1)  P_3) \nonumber\\
\geqslant &f(n,k,k+1)-f(n-3,k-1,k+1) +1\nonumber\\
	=&\left\lfloor (n-k-1)/2\right\rfloor +2. \nonumber
\end{align}
Hence, each $P_3$ in $H_1$ must contain a vertex share with at least $(\lfloor (n-k-1)/{2}\rfloor +2)/3$ $(k+1)$-cliques.
Taking such a vertex from each $P_3$ gives us a set $U$ of $k - 1$ vertices.

Assume that $H- U$ contains a copy of $P_3$, say $Q$. 
Since $n\geqslant  6\binom{3k-1}{k}+k$ and $k \geqslant 3$, we have 
$(\lfloor (n-k-1)/{2}\rfloor +2)/3 \geqslant \binom{3k-1}{k}$.
Thus each vertex in U has degree at least $3k-1$. Therefore, we can easily obtain a $(k-1)P_3$ within $H[N[U]\setminus V(Q)]$. In this case, $H$ contains $kP_3$, a contradiction.
Therefore, $H- U$ consists of independent edges and isolated vertices. 
Since $H$ has the maximum number of edges under certain constraints, we obtain that $H= K_{k-1}+ M_{n-k+1}$, where $K_{k-1} = H[U]$ and $M_{n-k+1}=H-U$.
The proof is complete.
\hfill$\square$ \medskip

	\section{Acknowledgements}
Zhipeng Gao is supported by the Fundamental Research Funds for the Central Universities(No. XISJ24049). 
Ping Li is supported by the National Natural Science Foundation of China (No. 12201375). 
Changhong Lu is supported by the National Natural Science
Foundation of China (No. 11871222, 11901554) and Science and Technology Commission
of Shanghai Municipality (No. 18dz2271000).

\bibliographystyle{unsrt} 
\bibliography{20241108The_maximum_number_of_cliques_in_k.P_3_-free_graphs}

\end{document}